\documentclass[11pt,a4paper]{article}
\usepackage[T1]{fontenc}
\usepackage[utf8]{inputenc}
\usepackage{amsmath,amssymb,amsthm,mathtools}
\usepackage{booktabs}
\usepackage{graphicx}
\usepackage{array}
\usepackage[margin=2.7cm]{geometry}
\usepackage[protrusion=true,expansion=false]{microtype}
\usepackage[hidelinks]{hyperref}

\theoremstyle{plain}
\newtheorem{theorem}{Theorem}[section]
\newtheorem{proposition}[theorem]{Proposition}
\newtheorem{lemma}[theorem]{Lemma}
\newtheorem{corollary}[theorem]{Corollary}
\newtheorem{conjecture}[theorem]{Conjecture}
\theoremstyle{definition}

\theoremstyle{remark}
\newtheorem{remark}[theorem]{Remark}
\newtheorem{question}[theorem]{Question}

\newtheorem*{theoremA}{Theorem A}
\newtheorem*{theoremB}{Theorem B}
\newtheorem*{theoremC}{Theorem C}
\newtheorem*{theoremD}{Theorem D}

\DeclareMathOperator{\ML}{ML}
\newcommand{\Z}{\mathbb{Z}}
\newcommand{\Q}{\mathbb{Q}}
\newcommand{\R}{\mathbb{R}}
\newcommand{\T}{\mathbb{T}}
\newcommand{\dist}[1]{\lVert #1\rVert}
\newcommand{\Dd}{D}
\newcommand{\UA}{U_A}
\newcommand{\UB}{U_B}
\newcommand{\UC}{U_C}

\numberwithin{equation}{section}

\title{Odd denominators in the Lonely Runner spectrum\\ for six speeds}
\author{Francesco Cordella\thanks{ENEA, Centro Ricerche Frascati, Via Enrico Fermi 45, 00044 Frascati (Rome), Italy. \texttt{francesco.cordella@enea.it}}}
\date{September 2026}

\begin{document}
\maketitle

\begin{abstract}
For distinct positive integers $v_1,\dots,v_n$ let $\ML(v_1,\dots,v_n)$ be the largest number $L$ such that at some time $t$ every $tv_i$ is at distance at least $L$ from the nearest integer; the Lonely Runner Conjecture asserts that $\ML\ge1/(n+1)$. Write $\ML=p/q$ in lowest terms. Kravitz conjectured that whenever $\ML<1/n$ one has $q=np+1$; Fan and Sun found counterexamples for $n=4$, conjectured that $q=np+k$ with $1\le k\le n$ always holds, and observed that in their data for $n=6$ only $k=1$ and $k=3$ occur. We explain this observation. For six speeds we show that all but finitely many tuples with $\ML<1/6$ satisfy $\ML=(P-1)/(6P)$ for an integer $P\equiv\pm1\pmod 6$; in particular $k\in\{1,3\}$ and the denominator $q$ is odd. The proof determines the three infinite two-parameter families of tuples on which such values concentrate, computes $\ML$ exactly on each family, and describes exactly where $k=3$ occurs. An exhaustive search over the $2\cdot10^9$ sextuples with speeds at most $110$ finds no exception. For five speeds the same method, together with Chen's classification of the tuples attaining $\ML=1/5$, shows that all but finitely many tuples with $\ML<1/5$ satisfy Kravitz's original conjecture. The computations are exact and the code is provided.
\end{abstract}

\noindent\textbf{MSC 2020:} 11K60 (primary); 11J13, 52C07, 11Y16.\\
\textbf{Keywords:} Lonely Runner Conjecture, maximum loneliness, Lonely Runner spectrum, view-obstruction, subtori.

\section{Introduction}\label{sec:intro}

\subsection{Lonely runners and their spectrum}
Suppose that $n+1$ runners start together on a circular track of unit length and run at pairwise distinct constant speeds. A runner is \emph{lonely} at a time when its distance from every other runner is at least $1/(n+1)$. The Lonely Runner Conjecture of Wills \cite{Wills} and Cusick \cite{Cusick74} asserts that every runner is lonely at some time. Passing to the frame of one runner, and using the reduction of Bohman, Holzman and Kleitman \cite{BHK} to integer speeds, the conjecture becomes the statement that for all distinct positive integers $v_1,\dots,v_n$
\[
\ML(v_1,\dots,v_n):=\max_{t\in\R}\ \min_{1\le i\le n}\dist{tv_i}\ \ge\ \frac1{n+1},
\]
where $\dist{x}$ denotes the distance from $x$ to the nearest integer. The conjecture is known for $n\le 9$: the cases $n\le 6$ by \cite{BetkeWills,CusickPomerance,BGGST,BHK,BarajasSerra08}, the case $n=7$ by Rosenfeld \cite{Rosenfeld} and the cases $n=8,9$ by Trakulthongchai \cite{Trak}, the last two with computer assistance. We refer to the survey of Perarnau and Serra \cite{PerarnauSerra} for the history of the problem.

Kravitz \cite{Kravitz} proposed to study not only the minimum but the whole set $S(n)$ of values of $\ML$, and conjectured that the part of $S(n)$ lying below $1/n$ is discrete, namely
\begin{equation}\label{eq:kravitz}
S(n)\cap(0,1/n)=\Bigl\{\frac{s}{ns+1}: s\in\mathbb N\Bigr\}.
\end{equation}
The values $s/(ns+1)$ are attained by the speeds $1,2,\dots,n-1,ns$. Kravitz proved \eqref{eq:kravitz} for $n\le 3$ and, for $n\in\{4,6\}$, in the regime where one speed is much larger than the others. Fan and Sun \cite{FanSun} found the family $\ML(8,4r+3,4r+11,4r+19)=(2r+7)/(4(2r+7)+2)$ of counterexamples for $n=4$ and proposed the amended conjecture that every value of $\ML$ below $1/n$ has the form $s/(ns+k)$ with $1\le k\le n$. Tabulating the values of $k$ that occur in their experiments, they noticed that for $n=6$ only $k\in\{1,3\}$ appears, and remarked that they could not explain why $k=2$ does not.

We call an $n$-tuple of distinct positive integers \emph{near-tight} if $\ML<1/n$, we always write $\ML=p/q$ in lowest terms, and we put $k:=q-np$. In this notation the amended conjecture of Fan and Sun says that $1\le k\le n$ for every near-tight tuple.

\subsection{Results}
The structural tool is the theory of relative Lonely Runner spectra of Giri and Kravitz \cite{GiriKravitz} and Jain and Kravitz \cite{JainKravitz}. In the view-obstruction formulation, an $n$-tuple of speeds is a one-dimensional subtorus $T=\langle (v_1,\dots,v_n)\rangle_\R$ of $\T^n:=(\R/\Z)^n$, and $\ML$ is expressed through the $L^\infty$-distance $\Dd(T)=1/2-\ML$ from $T$ to the centre $(1/2,\dots,1/2)$. The results of \cite{GiriKravitz} imply that all but finitely many near-tight $n$-tuples lie on a two-dimensional subtorus $U$ with $\Dd(U)=1/2-1/n$ (Lemma \ref{lem:reduction} below), and \cite{JainKravitz} describes the values of $\Dd$ on the one-dimensional subtori of such a $U$ in terms of finitely many arithmetic progressions. Our first result identifies the relevant subtori for six speeds.

\begin{theoremA}[Theorem \ref{thm:three}]
Up to permutations and sign changes of the coordinates, the two-dimensional proper subtori $U\subseteq\T^6$ with $\Dd(U)=1/3$ are exactly
\begin{gather*}
\UA=\langle(1,2,3,4,5,0),e_6\rangle_\R,\qquad \UB=\langle(1,3,4,5,9,0),e_6\rangle_\R,\\
\UC=\langle(1,0,1,2,3,3),(0,1,1,1,1,2)\rangle_\R .
\end{gather*}
\end{theoremA}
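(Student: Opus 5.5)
We need to classify the two-dimensional proper subtori $U\subseteq\T^6$ with $\Dd(U)=1/3$, i.e. those whose maximum loneliness is exactly $1/6$; this is the threshold $1/2-1/n$ for $n=6$. The plan is a structural reduction that brings the problem down to finitely many cases, followed by a finite, exact check.

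\textbf{Step 1: reduce to sextuples of maximal loneliness $1/6$.} The extreme value $\Dd(U)=1/2-1/n$ means that every one-dimensional subtorus of $U$ has $\ML\ge 1/6$. Conversely, a generic line in $U$ (rationally dense direction) has $\Dd$ equal to $\Dd(U)$ by the Giri--Kravitz theory. So $U$ is determined by the family of sextuples lying on it, and the generic ones satisfy $\ML=1/6$. Hence the first step is to understand which sextuples on a 2-dimensional subtorus have $\ML=1/6$ exactly.

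\textbf{Step 2: split by zero coordinates.} Write $U$ as the image of $\Z^2\to\Z^6$, $(a,b)\mapsto(\ell_1(a,b),\dots,\ell_6(a,b))$, with linear forms $\ell_i$, none identically zero except possibly one. (If two coordinates vanished identically, the remaining four would force $\Dd\le 1/2-1/5$; similarly, proportional forms reduce the effective number of speeds.)

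\emph{Case (a): some $\ell_i\equiv0$.} After a change of basis, $U$ contains $e_6$ together with a line carrying a quintuple with last coordinate $0$. Letting the sixth speed be arbitrary, the condition $\Dd(U)=1/3$ forces the five-speed part to have $\ML$ exactly $1/6$ and to be "robust" against an extra free speed. I would use the known classification of tight instances for five speeds (the quintuples with $\ML=1/6$, which are $(1,2,3,4,5)$ and $(1,3,4,5,9)$ up to scaling) to get $\UA$ and $\UB$.

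\emph{Case (b): no $\ell_i$ vanishes.} The six forms are pairwise nonproportional or come in proportional pairs. For each rational direction, the loneliness time must avoid the bands $\dist{t\ell_i}<1/6$. Covering arguments show that the forms must have small coefficients: Jain--Kravitz bound the heights of the $\ell_i$, since large coefficients give a line in $U$ with $\ML>1/6$, contradicting $\Dd(U)=1/3$ when equality must hold at all lines, or giving finitely many directions. With heights bounded, the coefficients lie in an explicit finite box, and an exact enumeration modulo $\mathrm{GL}_2(\Z)$, permutations and signs leaves only $\UC$.

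\textbf{Step 3: verify the three tori.} For each of $\UA,\UB,\UC$ I must show $\Dd=1/3$ exactly. For the upper bound, exhibit a point of $U$ at $L^\infty$-distance $1/3$ from the centre, e.g.\ $t=1/6$ along the first generator. For the lower bound, show that every point of $U$ has some coordinate within $1/6$ of an integer. This is a 2-dimensional covering statement and is checked on a fundamental domain, which is a finite polygonal computation.

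\textbf{Main obstacle.} The hard part is making the height bound in Case (b) explicit enough for a finite search. I would first try to apply the Jain--Kravitz description directly: $\Dd$ on the one-dimensional subtori of $U$ is governed by finitely many arithmetic progressions, and each coefficient has to divide a period bounded by $n$. Failing that, I would argue via a projection onto two coordinates to bound the determinants $|\det(\ell_i,\ell_j)|$ by a constant such as $6$.
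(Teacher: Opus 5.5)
Your proposal has a genuine gap exactly where you locate the main obstacle, and the mechanism you suggest for closing it cannot work. The inequality in Step~1 is reversed. $T\subseteq U$ gives $\Dd(T)\ge\Dd(U)$, that is, $\ML(T)\le\ML(U)=1/6$ for every line in $U$, and often strictly: $\langle(5,1,6,11,16,17)\rangle_\R\subseteq\UC$ has $\ML=5/33$. So ``large coefficients give a line in $U$ with $\ML>1/6$'' never happens and yields no height bound. The results of Jain and Kravitz compute the relative spectrum of a \emph{given} $U$; they do not bound the coefficients of an unknown critical one. Both of your fallback heuristics (each coefficient divides a period at most $n$; $|\det(\ell_i,\ell_j)|\le 6$) already fail for $\UB$, whose coordinate forms include $(9,0)$ and $(0,1)$. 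Your case split is also misplaced: a proper $U$ never has an identically vanishing coordinate form, since $U$ would then lie in $\{x_i=0\}$. $\UA$ and $\UB$ arise when five of the six forms are proportional, which is the product case you actually treat. $\UC$ instead has six pairwise non-proportional forms. All of the difficulty is therefore in your Case~(b), and you give no argument for it.

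The missing idea is to play the correct inequality $\ML(w)\le 1/6$ for $w\in U\cap\Z^6$ against the Lonely Runner Conjecture for at most five speeds. Suppose $w$ is primitive, has no zero entry, and lies on a repeat line $\ker(\ell_p-\varepsilon\ell_q)$. Its entries take at most five absolute values, so $\ML(w)\ge 1/6$. Hence $\ML(w)=1/6$, and up to signs and order $w$ is $(1,2,3,4,5)$ or $(1,3,4,5,9)$ with one entry doubled: an explicit finite list.

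Using this requires two further facts. First, $\ell_p\ne\pm\ell_q$, so that repeat lines are genuine lines. The paper deduces this from the nonexistence of a two-dimensional proper $W\subseteq\T^5$ with $\ML(W)=1/6$: all of its infinitely many zero-free primitive vectors would have to be tight quintuples of bounded size. Second, at least two repeat lines must avoid every vanishing line $\ker\ell_r$. Counting repeat lines between two proportionality classes gives this when there are at most five classes. A projective-line argument gives one such line when the six forms are pairwise non-proportional. An exact branch-and-prune computation then shows that one such line always forces a second.

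With these, $U$ is spanned by two vectors from the finite list. A finite exact enumeration (rational witnesses discard pairs with $\ML(U)>1/6$, and an exact sweep decides the rest) leaves $\UA,\UB,\UC$. Your Step~3 matches the paper's converse check, but without this repeat-vector reduction there is no finite search to perform.
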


The first two are the subtori of the form $\langle V\rangle_\R\times\R/\Z$ over the two tight quintuples of Bohman, Holzman and Kleitman, that is, the ``one very fast runner'' subtori studied in \cite[Sections 8--10]{Kravitz}; the third is the subtorus $U^7$ of \cite[Section 6]{JainKravitz}, where it was exhibited as the source of a new infinite family of values, and where the authors write that computing its full relative spectrum ``would be fairly lengthy''. Theorem A is proved without any a priori bound on the entries of the generators; the finiteness comes from a projective argument (Lemmas \ref{lem:count}--\ref{lem:onetwo}) combined with the classification of the tight quintuples.

The second result determines $\ML$ on the three subtori. For a two-dimensional subtorus $U=\langle u,v\rangle_\R$ whose generators span the lattice $U\cap\Z^n$, the one-dimensional subtori of $U$ are the $T=\langle Au+Bv\rangle_\R$ with $A,B$ coprime, and we call $(A,B)$ the parameters of $T$.

\begin{theoremB}[Theorems \ref{thm:fast} and \ref{thm:UC}]
Let $T$ be a one-dimensional subtorus of $\UA$, $\UB$ or $\UC$ whose six speeds are nonzero and pairwise distinct in absolute value, and suppose $\ML(T)<1/6$. Then
\[
\ML(T)=\frac{P-1}{6P}\qquad\text{for an integer } P\equiv\pm1\pmod 6 .
\]
Equivalently, $k\in\{1,3\}$ and the denominator $q$ of $\ML(T)$ is odd. On $\UA$ and $\UB$ one always has $k=1$. On $\UC$, with parameters $(A,B)$ normalised by $A>0$, one has $k=3$ exactly when
\begin{center}
\begin{tabular}{@{}lll@{}}
\toprule
$(A,B)\bmod 6$ & condition & in terms of the slope $B/A$\\
\midrule
$(0,5)$ & $A+2B>0$ & $B/A>-1/2$\\
$(0,1)$ & $5A+2B<0$ & $B/A<-5/2$\\
$(5,1)$ & $5A+3B>0$ and $A-B>0$ & $-5/3<B/A<1$\\
$(5,2)$ & $4A+B>0$ and $4A+3B<0$ & $-4<B/A<-4/3$\\
\bottomrule
\end{tabular}
\end{center}
and $k=1$ in all other near-tight cases. Moreover $\max_i|v_i|<3q$ for every such $T\subseteq\UC$, and the constant $3$ is optimal.
\end{theoremB}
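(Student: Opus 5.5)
We compute $\ML(T)$ directly on each of the three families; Theorem B is a summary of these computations. Write $D(T)=1/2-\ML(T)$. For $T\subseteq\UA$ or $\UB$ the speeds are $(sA_1,\dots,sA_5,B)$, with $V=(A_1,\dots,A_5)$ a tight quintuple ($(1,2,3,4,5)$ or $(1,3,4,5,9)$) and $\gcd(s,B)=1$; up to sign we may take $s=A>0$. The value is
\[
\ML(T)=\max_{t}\min\bigl(\min_i\dist{tAA_i},\ \dist{tB}\bigr).
\]
The set where $\min_i\dist{xA_i}\ge 1/6$ is a finite union of short arcs around the points $x=a/6$ with $a\equiv\pm1\pmod 6$ (tightness of $V$). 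Near each such point the function $\min_i\dist{xA_i}$ is piecewise linear with slopes $\pm\max A_i$ bounded. So $\ML$ is the best compromise: we need $tA$ close to $\pm1/6$ mod $1$, and simultaneously $tB$ far from $\Z$.

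Write $t=(m\pm1/6+\varepsilon)/A$. Then $tB=(mB\pm B/6)/A+\varepsilon B/A$. Optimising over $m$ and the sign reduces to choosing $B/6A$ modulo $1/A$, i.e.\ the residue of $B$ mod $6$ together with the lattice $\Z/A$. The optimum balances the slope-$5$ (resp.\ $9$) loss in the first group against the gain in $\dist{tB}$. I expect the optimum to be attained at the intersection of two linear pieces, which gives $\ML=(P-1)/(6P)$ with $P=|B|$ or a related linear form; $P\equiv\pm1\pmod 6$ because $\gcd$ and near-tightness force $B$ coprime to $6$ when $\ML<1/6$. This case presumably reproduces Kravitz's one-fast-runner analysis \cite[Sections 8--10]{Kravitz} but without the assumption that $B$ is large. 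To cover all $(A,B)$ I would use the relative-spectrum description of \cite{JainKravitz}: $D$ on the one-dimensional subtori of $U$ is given on finitely many cones of slopes by arithmetic-progression formulas. Each cone then yields a formula of the form $P=\alpha A+\beta B$ with coefficients fixed by the cone.

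For $\UC$ the speeds are $(A,B,A+B,2A+B,3A+B,3A+2B)$. Here I would work in the two-dimensional picture. $D(U)=1/3$ is attained at finitely many points $x_j\in U$ at $L^\infty$ distance exactly $1/3$ from the centre. Near each $x_j$ the distance function is the maximum of finitely many affine functions, i.e.\ a polyhedral cone $C_j$ in the tangent plane $\R^2$. The subtorus $T$ with direction $(A,B)$ passes through points $x_j+w$ with $w$ ranging over a translate of the lattice $\frac1{N}\Z$-line structure. Concretely, the closest approach of $T$ to $x_j$ in the gauge $C_j$ equals $c_j(A,B)/\mathrm{(index)}$, where $c_j$ is determined by the residues of $(A,B)$ modulo the denominator of $x_j$, which is $6$. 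Thus I would:
\begin{enumerate}
\item list the points $x_j$ and their gauge cones;
\item for each residue class $(A,B)\bmod 6$, compute the minimal gauge value of the lattice offsets, which is piecewise linear in $(A,B)$ with breakpoints at the slopes in the table;
\item take the minimum over $j$.
\end{enumerate}
The result is $\ML=(P-1)/(6P)$ with $P$ a linear form in $A,B$ chosen by the cone. The gcd of $p=P-1$ and $q=6P$ is $1$ or $3$ according to $P\bmod 3$, giving $k=1$ or $3$. The table records exactly when the relevant $P$ is $\equiv 1\pmod 3$ after reduction.

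The bound $\max|v_i|<3q$ follows by comparing $3A+2B$ and the other entries with $18P$ on each cone, and optimality is witnessed by a sequence along a boundary slope. The main obstacle is step (ii) on $\UC$: the case analysis over $36$ residue classes and several cones. I would first compute $\ML$ exactly on a large grid of $(A,B)$ by computer to guess the linear forms on each cone, then verify each guess by a uniform argument. The guess gives a lower bound by exhibiting a time $t$; the matching upper bound comes from showing that any $t$ beating it would give a point of $U$ violating $D(U)=1/3$ within the known cone.
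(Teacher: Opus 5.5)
\textbf{Verdict: the proposal has genuine gaps.} It is a plan rather than a proof. Its concrete arithmetic claims are wrong on both $\UA,\UB$ and $\UC$, and the congruences that make up the content of the theorem are asserted, not derived.

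On $\UA$ and $\UB$ your argument is only a local heuristic, and its arithmetic points the wrong way. Near-tightness does not make $B$ coprime to $6$; it forces the dilation factor to be $1$ and $6\mid B$.
If $A\ge2$, the times $t=1/(6A)+h/A$ keep the runners $AV$ at the positions $V/6$. These are at distance at least $1/6$ from $0$, since no entry of $V$ is divisible by $6$. Meanwhile the runner $B$ sweeps a coset of $\frac1A\Z/\Z$, so some $h$ puts it at distance at least $1/4$, giving $\ML\ge1/6$.
If $A=1$ and $6\nmid B$, the time $t=1/6$ already gives $\ML\ge1/6$.
So the near-tight tuples are $V\cup\{6s\}$, and the claim to prove is $\ML=s/(6s+1)$, i.e.\ $P=B+1\equiv1\pmod 6$. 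This is where $k=1$ comes from, and your proposal never establishes $k=1$ on these subtori.
Your slope balancing at $t=\pm1/6$ (slope $1$ of the speed-one runner against slope $6s$) does produce $s/(6s+1)$. But it only optimises over times near $\pm1/6$; it does not exclude that some other time does better. For small $s$ that happens: $\ML(1,3,4,5,9,6)=2/13>1/7$.
The paper gets the global statement from Kravitz's Lemma 9.4 for $B>4\max(V)^3$, together with exact computation below that bound. You need some argument of this kind.

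On $\UC$ the two-dimensional gauge picture is a sound reformulation of the circle-by-circle analysis of Section~\ref{sec:UC}. The minimising vertex of a gauge polygon lies where two active coordinates have equal distance, i.e.\ on one of the circles $U_{ij\varepsilon}$. Since the lines of $T$ in $U$ are $1/\sqrt{A^2+B^2}$ apart, it would in principle make a single local formula valid outside a disc, with no strip analysis. But the proposal stops where the content of the theorem begins:
\begin{enumerate}
\item \textbf{The congruences are asserted, not derived.} That $P$ is an integer on every piece, that $P\equiv\pm1\pmod6$, and that $P\equiv5\pmod6$ exactly in the four rows of the table are stated (``the result is\dots'', ``the table records\dots''), and nothing in your argument forces them. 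The paper obtains them only by exact evaluation of the winning form on every piece: residue class modulo $6$, cone and subcone, refined modulo $36$ because the forms have coefficients in $\frac16\Z$. It also uses explicit thresholds ($q_0$, $f_{\mathrm{out}}$) certifying that the local formula is the global maximum, and computes every parameter below them directly.
\item \textbf{The conversion from $P$ to $k$ is wrong.} $\gcd(P-1,6P)=\gcd(P-1,6)$ is $6$ or $2$ for odd $P$, giving $k=1$ or $3$. The values $1$ and $3$ occur only for even $P$, and give $k=6$ or $2$. Within $P\equiv\pm1\pmod6$, $k=3$ means $P\equiv2\pmod3$, not $1$. Moreover $P\equiv3\pmod6$ would also give $k=3$ with $q$ odd, so excluding it is a further fact to be proved.
\item \textbf{The proposed upper bound cannot work.} The argument ``a better $t$ would give a point of $U$ violating $\Dd(U)=1/3$'' fails, since every value involved is below $1/6$.
\item \textbf{The bound $\max_i|w_i|<3q$ is mishandled.} The comparison is with $3q\in\{3P,9P\}$, not $18P$. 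Strictness needs the fact that the supremum $3$ is approached only along the rays $B=0$, $3A+B=0$, $3A+2B=0$, where a speed vanishes. It also needs a check of the finitely many directly computed pairs.
\end{enumerate}
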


The progression $1/3+\tfrac16\mathrm{Prog}(6,11)$ of \cite[Theorem 1.5]{JainKravitz}, which corresponds to $A=6$ and $B\equiv5\pmod 6$, is contained in the first row of the table; the other three rows, and the fact that nothing else occurs, are new. The proof of the statement about $\UC$ is a complete, explicit version of the sector analysis of \cite{JainKravitz}: we make the thresholds in their Lemma 2.5 and Proposition 2.6 explicit, show that the relevant offsets depend only on $(A,B)$ modulo $6$, decompose the parameter plane into finitely many cones on which the competing candidates are homogeneous linear forms, treat separately the finitely many lattice lines on which some candidate has a bounded denominator, and check the remaining $23\,214$ parameter pairs directly. Everything is done in exact rational arithmetic.

Combining Theorems A and B with the reduction lemma gives the explanation of the observation of Fan and Sun.

\begin{theoremC}[Theorem \ref{thm:six}]
There is a finite set $E_6$ of near-tight sextuples such that every near-tight sextuple not in $E_6$ has $\ML=(P-1)/(6P)$ with $P\equiv\pm1\pmod 6$; in particular $k\in\{1,3\}$ and $q$ is odd. No sextuple with all speeds at most $110$ violates this conclusion. Among the $348$ near-tight primitive sextuples with speeds at most $110$, all $33$ with $k=3$ lie on $\UC$, and exactly nine lie on none of $\UA,\UB,\UC$; they are listed in Table \ref{tab:sporadic6} and all have $k=1$.
\end{theoremC}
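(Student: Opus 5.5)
The plan is to combine the reduction lemma with Theorems A and B, and to treat the quantitative claims about speeds at most $110$ as a separate finite computation.

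\textbf{Step 1 (reduction).} Apply Lemma \ref{lem:reduction} with $n=6$. It says that, apart from a finite set $E_6'$ of near-tight sextuples (up to scaling and order), every near-tight sextuple $(v_1,\dots,v_6)$ defines a one-dimensional subtorus $T=\langle(v_1,\dots,v_6)\rangle_\R$ contained in some two-dimensional proper subtorus $U\subseteq\T^6$ with $\Dd(U)=1/2-1/6=1/3$. Two normalisations are needed here. Scaling a tuple by its gcd does not change $\ML$, so it suffices to treat primitive tuples. The finite exceptional set should therefore be understood as a finite set of primitive tuples, and $E_6$ is the set of its images. If the reduction lemma is phrased in terms of subtori rather than tuples, I would check that only finitely many primitive sextuples correspond to each exceptional subtorus. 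This holds because a one-dimensional subtorus determines its primitive generator up to sign and the ordering of coordinates.

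\textbf{Step 2 (identification and evaluation).} By Theorem A, $U$ is the image of $\UA$, $\UB$ or $\UC$ under a permutation and sign changes of the coordinates. These symmetries preserve $\Dd$ and the values $\dist{tv_i}$, so we may assume $U\in\{\UA,\UB,\UC\}$ itself, with $T$ replaced by its image. Because the $v_i$ are distinct positive integers, the image of $T$ has six nonzero speeds that are pairwise distinct in absolute value, and $\ML(T)<1/6$. These are exactly the hypotheses of Theorem B, which gives $\ML=(P-1)/(6P)$ with $P\equiv\pm1\pmod 6$.

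It remains to read off the arithmetic of this value. Since $P\equiv\pm1\pmod 6$, we have $\gcd(P,6)=1$. Write $g=\gcd(P-1,6P)$. Any divisor of $g$ divides both $P-1$ and $6P$; it is coprime to $P$, hence divides $6$. Since $P$ is odd, $P-1$ is even, so $2\mid g$. Moreover $3\mid P-1$ iff $P\equiv1\pmod 6$. Hence
\[
p/q=\bigl((P-1)/2\bigr)/(3P)\ \text{ if } P\equiv5\pmod 6,\qquad p/q=\bigl((P-1)/6\bigr)/P\ \text{ if } P\equiv1\pmod 6.
\]
In the first case $k=q-6p=3P-3(P-1)=3$; in the second $k=P-(P-1)=1$. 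In both cases $q$ is odd. This proves the structural part of the theorem.

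\textbf{Step 3 (computation).} The claims about speeds at most $110$ are a finite exact computation, using the code provided. Enumerate all $\binom{110}{6}\approx2\cdot10^9$ sextuples and compute $\ML$ exactly. The maximum is attained at some $t=j/(v_a+v_b)$ or $t=j/(v_a-v_b)$, so only finitely many rational candidate times need checking, and the computation can be pruned using known bounds. For each near-tight tuple, record $k$ and the parity of $q$. For each primitive near-tight tuple, test membership in $\UA$, $\UB$, $\UC$ up to signed permutations. A tuple lies in $U=\langle u,w\rangle_\R$ iff, after some signed permutation, the vector is an integer combination of the lattice generators $u,w$; this is a small rank check. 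Report the $33$ tuples with $k=3$ on $\UC$ and the nine sporadic tuples.

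\textbf{Main obstacle.} No step is hard given the earlier results. The only care needed is in Step 1, in matching the notion of exceptions in the reduction lemma (primitive tuples versus subtori, finiteness up to symmetry) with the finiteness statement for $E_6$. Making the $2\cdot10^9$-tuple search feasible in exact arithmetic is an engineering issue rather than a mathematical one.
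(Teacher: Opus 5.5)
Your proposal is correct and is essentially the paper's proof: the reduction lemma for $n=6$, then Theorem \ref{thm:three} up to signed permutations, then Theorems \ref{thm:fast} and \ref{thm:UC} on the three critical subtori (your computation of $k$ is Lemma \ref{lem:Pk}), and the exhaustive search for the finite claims. One small correction to Step 3: recording $k$ and the parity of $q$ does not certify $P\equiv\pm1\pmod 6$, since $4/27$ has $k=3$ and odd $q$ but $P=9$, so the search must check $P \bmod 6$ from the exact value of $\ML$.
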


The exceptional set $E_6$ is nonempty: the nine sporadic sextuples of Table \ref{tab:sporadic6} belong to it. We do not know an explicit bound for $E_6$; the constants in \cite{GiriKravitz} are effective in principle but far too large to be used, and we state the absence of a counterexample below $110$ as a conjecture (Conjecture \ref{conj:parity}).

For five speeds the same programme goes through with a simpler outcome. Jain and Kravitz remark that their method requires the classification of the tight instances with $n-1$ speeds, which they list as available for $n-1\in\{1,2,3,5\}$; in fact the case of four speeds is settled by Chen \cite{Chen91} (see also \cite{BarajasSerra09}), the tight quadruples being $(1,2,3,4)$ and $(1,3,4,7)$ up to dilation.

\begin{theoremD}[Theorems \ref{thm:two} and \ref{thm:five}]
Up to permutations and sign changes of the coordinates, the two-dimensional proper subtori $U\subseteq\T^5$ with $\Dd(U)=3/10$ are exactly $\langle(1,2,3,4,0),e_5\rangle_\R$ and $\langle(1,3,4,7,0),e_5\rangle_\R$. Consequently there is a finite set $E_5$ of near-tight quintuples such that every near-tight quintuple not in $E_5$ has $\ML=s/(5s+1)$ for some $s\in\mathbb N$, that is, satisfies \eqref{eq:kravitz}. No quintuple with speeds at most $130$ violates this conclusion.
\end{theoremD}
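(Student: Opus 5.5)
The plan has two halves: first classify the two-dimensional subtori of $\T^5$ with $\Dd(U)=3/10$, then transfer the classification to near-tight quintuples through the reduction lemma and the relative-spectrum description of \cite{JainKravitz}. The argument follows the one for Theorem A, with the tight quadruples of Chen replacing the tight quintuples of Bohman, Holzman and Kleitman.

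\textbf{Step 1: classification.} Let $U\subseteq\T^5$ be a proper two-dimensional subtorus with $\Dd(U)=1/2-1/5=3/10$.
\begin{itemize}
\item I would first apply the projective argument of Lemmas \ref{lem:count}--\ref{lem:onetwo}. This shows that either some coordinate vanishes identically on one generator, giving a splitting $U=\langle V\rangle_\R\times\R/\Z$, or $U$ is one of finitely many explicit tori with small generators.
\item In the split case, $\Dd(U)=3/10$ forces $\Dd(\langle V\rangle_\R)=3/10$ in $\T^4$, that is, $\ML(V)=1/5$. So $V$ is a tight quadruple, and Chen's classification \cite{Chen91} gives $V\in\{(1,2,3,4),(1,3,4,7)\}$ up to dilation. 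Conversely, both products have $\Dd=3/10$: sending the extra coordinate to $1/2$ recovers the distance of $\langle V\rangle$.
\item The non-split candidates form a finite explicit list. I would check each one directly by exact computation of $\Dd$, and expect all of them to have $\Dd<3/10$. For six speeds the analogous check is exactly where $\UC$ survives, so the essential content of Theorem D is that no analogue of $\UC$ exists in dimension $5$.
\end{itemize}

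\textbf{Step 2: spectrum.} By Lemma \ref{lem:reduction}, all but finitely many near-tight quintuples lie on one of the two subtori above. On $\langle (V,0),e_5\rangle_\R$ the one-dimensional subtori are $\langle (AV,B)\rangle$ with $\gcd(A,B)=1$. I would then argue as in Theorem \ref{thm:fast}, the one-fast-runner analysis that gives $k=1$ on $\UA$ and $\UB$:
\begin{itemize}
\item Near-tightness forces $A$ to be fixed by tightness of $V$, up to finitely many lattice lines. Concretely, the time $t$ must lie near the unique optimal times of $V$, and there the fast coordinate $B$ controls the loneliness.
\item The maximum is then attained at $t=s/(5s+1)$-type points, which yields $\ML=s/(5s+1)$.
\item This is Kravitz's ``one very fast runner'' computation \cite[Sections 8--10]{Kravitz}, which I would redo with explicit thresholds. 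The remaining bounded region of parameters is finite and is absorbed into $E_5$.
\end{itemize}

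\textbf{Step 3: computation.} The claim about speeds at most $130$ is an exhaustive exact-rational computation of $\ML$ over all primitive quintuples in that range, using the same code as for Theorem C.

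\textbf{Main obstacle.} I expect the hardest part to be Step 1: certifying that the finite non-split list contains no $\UC$-like torus. This requires the projective bound to be sharp enough that the residual search is genuinely finite and checkable. I would first try the same bound as in Lemma \ref{lem:onetwo} specialised to $n=5$, and fall back on a direct computer enumeration if it is too weak.
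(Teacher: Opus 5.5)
\textbf{Step 1 and Step 3.} These are essentially the paper's proof of Theorem~\ref{thm:two} and its exhaustive search. However, you misdescribe what Lemmas~\ref{lem:count}--\ref{lem:onetwo} deliver.
\begin{itemize}
\item They do not give a split/non-split dichotomy. A coordinate vanishing on one generator does not make $U$ a product, as the generators of $\UC$ show.
\item What they give is that $U$ is spanned by two tight repeat vectors. There are only $7\,680$ of these up to sign, so the enumeration is finite automatically, and your worry about the projective bound being sharp enough does not arise.
\item The two product tori need no separate treatment: they simply appear among the survivors of the exact check.
\end{itemize}

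\textbf{The gap is in Step 2.} The one-dimensional subtori of $\langle (V,0),e_5\rangle_\R$ are the tuples $AV\cup\{B\}$ with $\gcd(A,B)=1$ and $A$ arbitrary. Kravitz's one-very-fast-runner analysis concerns only the line $A=1$ with $B$ large.

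You assert without argument that near-tightness fixes $A$ ``up to finitely many lattice lines'', and then absorb a bounded region into $E_5$. But the parameters with $A\ge2$ are infinitely many and not bounded, so they cannot go into a finite $E_5$. Your heuristic that $t$ must lie near ``the unique optimal times of $V$'' is exactly what fails there: each equality time of $V$ has $A$ distinct lifts $t=\frac1{5A}+\frac hA$, $0\le h<A$, for the tuple $AV$.

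The missing idea is the pre-jump argument (Proposition~\ref{prop:prejump}):
\begin{itemize}
\item At every such lift the runners of $AV$ sit at the positions $V_i/5$ modulo $1$. These are at distance at least $1/5$ from $0$, because no entry of $(1,2,3,4)$ or $(1,3,4,7)$ is divisible by $5$.
\item Since $\gcd(A,B)=1$, the runner $B$ runs through a coset of $\frac1A\Z/\Z$. So some lift puts it at distance at least $\frac12-\frac1{2A}\ge\frac14$ from $0$.
\item Hence $\ML\ge1/5$ for every $A\ge2$, and no near-tight tuple lies off the line $A=1$.
\end{itemize}

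\textbf{What remains on the line $A=1$.} You also omit the divisibility step there: if $5\nmid B$, then $t=1/5$ gives $\ML\ge1/5$. For $B=5s$, Lemma~9.4 of \cite{Kravitz} gives $\ML=s/(5s+1)$ beyond the explicit bound $B>4\max(V)^3$, that is, $B>256$ and $B>1372$ for the two quadruples. Below that bound you need an exact computation, or absorption into $E_5$, which is legitimate here since only finitely many $B$ are involved. The computation finds $(1,3,4,7,5)$ with $\ML=2/11$. This is still of the form $s/(5s+1)$, but with $s=2\neq B/5$, so your claim that the maximum sits at ``$s/(5s+1)$-type points'' with $s=B/5$ is not literally true for small $B$.
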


Thus for five speeds there is no analogue of $\UC$: both critical subtori are of one-very-fast-runner type, and the arithmetic of the relative spectrum is the one already worked out by Kravitz. In the language of \cite{JainKravitz}, $S_1(5)\cap(3/10,1/2]$ has finite symmetric difference with $3/10+\tfrac15\mathrm{Prog}(5,6)$; this is the analogue for five speeds of their Theorem 1.3.

\subsection{Methods, and what is computed}
The proofs have three ingredients. The classification of the critical subtori (Section \ref{sec:class}) is a finite argument: a zero-free integer vector of $U$ with a repeated entry must be a tight instance (Lemma \ref{lem:shape}), a counting argument on the projective line shows that $U$ always contains two independent such vectors (Lemmas \ref{lem:count}--\ref{lem:onetwo}), and an exact enumeration of the pairs finishes the job. The one-very-fast-runner subtori (Section \ref{sec:fast}) are handled by a short ``pre-jump'' argument, a direct computation up to an explicit bound, and Lemma 9.4 of \cite{Kravitz} beyond it. The subtorus $\UC$ (Section \ref{sec:UC}) requires the sector analysis of \cite{JainKravitz}; we describe it in enough detail that the reader can follow which finite computation is being performed and why its output is a proof.

The logical shape of the paper is thus: an empirical anomaly (no even $k$ for six speeds) is reduced to three critical subtori; on each of them an exact decomposition of the parameter plane identifies the value of $\ML$ as a linear form on every piece; reading the residue of that form modulo $6$ explains the anomaly and shows that it is only part of a stronger congruence, $P\equiv\pm1\pmod 6$; and what remains open is confined to the finitely many sporadic tuples that escape the reduction.

All computations are exact. The maximum loneliness of an integer tuple is computed by scanning the finitely many candidate times at which two runners coincide or are antipodal (Lemma \ref{lem:pairtimes}), with fractions compared by cross-multiplication in integer arithmetic; two independent implementations, in Python and in C, agree on every tuple tested. The accompanying code reproduces every number quoted in the paper; see Section \ref{sec:code}.

\subsection{Organisation}
Section \ref{sec:prelim} fixes notation, states the results from \cite{Kravitz,GiriKravitz,JainKravitz} that we use, and proves the reduction lemma. Section \ref{sec:class} classifies the critical subtori for five and six speeds. Section \ref{sec:fast} treats the one-very-fast-runner subtori and Section \ref{sec:UC} the subtorus $\UC$. Section \ref{sec:cons} draws the consequences for the spectra and reports the exhaustive searches. Section \ref{sec:code} describes the code.

\section{Preliminaries}\label{sec:prelim}

\subsection{Notation}
Throughout, $\dist{x}$ is the distance from the real number $x$ to the nearest integer, and $\T^n=(\R/\Z)^n$. For a closed subset $X\subseteq\T^n$ let
\[
\ML(X):=\sup_{x\in X}\min_{1\le r\le n}\dist{x_r},\qquad \Dd(X):=\tfrac12-\ML(X),
\]
so that $\Dd(X)$ is the $L^\infty$-distance from $X$ to the point $(1/2,\dots,1/2)$, as in \cite{GiriKravitz,JainKravitz}. A subtorus $T\subseteq\T^n$ is \emph{proper} if it is not contained in the union of the coordinate hyperplanes, equivalently if $\Dd(T)<1/2$. For integers $v_1,\dots,v_n$, not all zero, let $\langle v\rangle_\R$ denote the image in $\T^n$ of the line $\R v$; it is a one-dimensional subtorus, proper if and only if all $v_r$ are nonzero, and
\[
\ML(v_1,\dots,v_n)=\ML(\langle v\rangle_\R).
\]
Signs and the order of the speeds are irrelevant, and so is a common factor, so we may speak of the $\ML$ of a set of positive integers. A tuple with $\ML=1/(n+1)$ is \emph{tight}. A tuple with $\ML<1/n$ is \emph{near-tight}; for such a tuple we write $\ML=p/q$ in lowest terms and $k:=q-np\ge1$. For $n=6$ we shall find that $k$ divides $q$, and then we write $P:=q/k$. The following observation translates between the two descriptions.

\begin{lemma}\label{lem:Pk}
Let $P\ge2$ be an integer and $\ML=(P-1)/(6P)$. Then $k=6/\gcd(P-1,6)$. In particular, if $P\equiv1\pmod6$ then $k=1$ and $q=P$; if $P\equiv5\pmod6$ then $k=3$ and $q=3P$; and $P\equiv\pm1\pmod 6$ implies that $q$ is odd.
\end{lemma}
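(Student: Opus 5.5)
We reduce $\ML=(P-1)/(6P)$ to lowest terms and then compute $k=q-6p$. Put $g:=\gcd(P-1,6P)$. Since $\gcd(P-1,P)=1$, we have $\gcd(P-1,6P)=\gcd(P-1,6)$, so $g=\gcd(P-1,6)$. It follows that
\[
p=\frac{P-1}{g},\qquad q=\frac{6P}{g},
\]
and these are coprime by construction.

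Next we compute $k$ directly:
\[
k=q-6p=\frac{6P-6(P-1)}{g}=\frac{6}{g}.
\]
This is the claimed formula $k=6/\gcd(P-1,6)$. The hypothesis $P\ge2$ only guarantees $p\ge1$, so that the value is a genuine positive fraction.

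For the special cases we read $g$ off from $P\bmod 6$.
\begin{itemize}
\item If $P\equiv1\pmod 6$, then $6\mid P-1$, so $g=6$. Hence $k=1$ and $q=P$.
\item If $P\equiv5\pmod 6$, then $P-1\equiv4\pmod 6$, so $P-1$ is even and $3\nmid P-1$. Hence $g=2$, which gives $k=3$ and $q=3P$.
\end{itemize}
In both cases $P$ is odd. So $q=P$ or $q=3P$ is odd, which gives the final assertion.

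There is no real obstacle here. The only point needing care is the identity $\gcd(P-1,6P)=\gcd(P-1,6)$, and this follows at once from $\gcd(P-1,P)=1$.
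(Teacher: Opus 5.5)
Your proposal is correct and follows the paper's proof exactly: you reduce $(P-1)/(6P)$ using $\gcd(P-1,6P)=\gcd(P-1,6)$ and get $k=q-6p=6/g$, then read off the cases $P\equiv1,5\pmod6$. The paper leaves those two cases and the parity of $q$ implicit, and you have simply spelled them out.
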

\begin{proof}
With $g=\gcd(P-1,6P)=\gcd(P-1,6)$ we have $p=(P-1)/g$, $q=6P/g$ and $k=q-6p=6/g$.
\end{proof}

\subsection{Subtori and their parameters}
Let $U=\langle u,v\rangle_\R\subseteq\T^n$ be a two-dimensional subtorus, the image of the plane spanned by $u,v\in\Z^n$. Following \cite[Section 2.2]{JainKravitz} we always choose $u,v$ such that
\begin{equation}\label{eq:saturated}
\langle u,v\rangle_\R\cap\Z^n=\langle u,v\rangle_\Z ;
\end{equation}
then $(s,t)\mapsto su+tv$ induces an isomorphism $\R^2/\Z^2\to U$, and the one-dimensional subtori of $U$ are exactly the $T=\langle Au+Bv\rangle_\R$ with $(A,B)$ coprime, two pairs giving the same subtorus if and only if they differ by a sign. We call $(A,B)$ the \emph{parameters} of $T$ and $w=Au+Bv$ its \emph{speed vector}; $T$ is proper with $n$ distinct speeds if and only if the entries of $w$ are nonzero and pairwise distinct in absolute value. The \emph{coordinate forms} of $U$ are the linear forms $\varphi_r(s,t)=su_r+tv_r$, identified with the integer vectors $(u_r,v_r)\in\Z^2$, $1\le r\le n$. A vector $su+tv\in U$ has vanishing $r$-th entry exactly when $(s,t)\in\ker\varphi_r$, and satisfies $|w_p|=|w_q|$ exactly when $(s,t)\in\ker(\varphi_p-\varepsilon\varphi_q)$ for some sign $\varepsilon$.

\subsection{Optimal times}
The function $t\mapsto\min_r\dist{tv_r}$ is continuous and piecewise linear, and its maximum is attained where two runners coincide or are antipodal. This is Proposition 2.1 of \cite{Kravitz}; we record the form in which we use it.

\begin{lemma}\label{lem:pairtimes}
Let $v_1,\dots,v_n$ be nonzero integers with $\ML(v)<1/2$. Then
\[
\ML(v)=\max\Bigl\{\min_r\dist{mv_r/L}: 1\le i<j\le n,\ \varepsilon\in\{+,-\},\ L=v_i-\varepsilon v_j\ne0,\ 0\le m<|L|\Bigr\}.
\]
\end{lemma}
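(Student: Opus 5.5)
The plan is to analyse the piecewise-linear function $f(t)=\min_r\dist{tv_r}$ on one period $[0,1)$. We show that a maximiser can always be moved to a point where two runners coincide or are antipodal. Then we identify these points with the listed fractions $m/L$.

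First we note why the maximum exists and is positive. $f$ is continuous and $1$-periodic, so it attains its maximum $M=\ML(v)$ at some $t_0\in[0,1)$. If $M=0$, then $t_0=0$ is of the form $m/L$ with $m=0$, and every candidate value is $\ge 0$, so the formula holds. Since $M<1/2$ by hypothesis, no runner is at distance $1/2$ at $t_0$.

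Next we move $t_0$ to a breakpoint. Each $g_r(t)=\dist{tv_r}$ is piecewise linear with slope $\pm|v_r|\neq 0$ away from the points where $tv_r\in\frac12\Z$. Let $R$ be the set of indices $r$ with $g_r(t_0)=M$. For $r\in R$ we have $0<M<1/2$, so $g_r$ is differentiable at $t_0$ with nonzero slope $s_r$. (If $M=0$ we are already done.) Suppose all $s_r$, $r\in R$, have the same sign. Then moving $t_0$ slightly in the direction that increases every $g_r$, $r\in R$, keeps the other $g_r$ above $M$ by continuity. This strictly increases $f$, contradicting maximality. Hence there are $i\ne j$ in $R$ with $s_i>0>s_j$, and $g_i(t_0)=g_j(t_0)=M$.

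Now we translate $g_i(t_0)=g_j(t_0)$ into an arithmetic condition. Equality of distances to the nearest integer means $t_0v_i\equiv \pm t_0v_j\pmod 1$. This gives $t_0(v_i-\varepsilon v_j)\in\Z$ for some sign $\varepsilon$, with $L=v_i-\varepsilon v_j$. If $L=0$, then $v_i=\varepsilon v_j$. In that case $g_i$ and $g_j$ coincide as functions, so their slopes at $t_0$ are equal, contradicting $s_i>0>s_j$. Hence $L\ne0$, and $t_0\equiv m/|L|\pmod 1$ for some $0\le m<|L|$. After replacing $L$ by $-L$ if necessary (by swapping the roles, or noting that $f$ is even), the point is $m/L$ in the listed form. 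So $M$ is attained at a listed candidate.

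Finally, the reverse inequality is trivial: every candidate value $f(m/L)$ is at most $\max f=M$. Combining the two directions gives the formula. The only delicate point in the argument is the case $L=0$ with $|v_i|=|v_j|$. It is excluded by the slope argument above, and the paper's convention of pairwise distinct absolute values also rules it out.
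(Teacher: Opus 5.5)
Your proof is correct and follows essentially the paper's route: at a maximiser at least two runners must be active, and equality of their distances gives $t_0(v_i-\varepsilon v_j)\in\Z$. Your opposite-slopes argument is only a tidier way than the paper's ``replace $j$ by another active runner'' step of excluding the degenerate case $v_i=\varepsilon v_j$ (minor slip: only the active runners $r\in R$ avoid distance $1/2$ at $t_0$, e.g.\ for $(1,2,3)$ at $t_0=1/4$ the speed-$2$ runner sits at $1/2$, but that is all you use; and since $t_0L\in\Z$ you may reduce the numerator modulo $|L|$ directly, so no sign change of $L$ is needed).
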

\begin{proof}
Let $f(t)=\min_r\dist{tv_r}$ and let $t_0$ be a maximum of $f$. If only one runner $i$ attains the minimum at $t_0$, then $f$ agrees near $t_0$ with $\dist{tv_i}$, which has a local maximum only where $tv_i\equiv1/2$, giving $f(t_0)=1/2$, excluded. So two runners $i\ne j$ attain the minimum at $t_0$, that is $\dist{t_0v_i}=\dist{t_0v_j}$, which means $t_0v_i\equiv\varepsilon t_0v_j\pmod 1$ for some sign, so that $t_0(v_i-\varepsilon v_j)\in\Z$. If $v_i=\varepsilon v_j$ the two functions coincide and we may replace $j$ by another runner attaining the minimum; if all runners attaining the minimum have speeds equal up to sign to $v_i$ we are back in the first case. Hence $t_0=m/L$ for some pair with $L\ne0$.
\end{proof}

In the language of subtori, Lemma \ref{lem:pairtimes} says that $\Dd(T)=\min_{i,j,\varepsilon}\Dd(T\cap\{x_i=\varepsilon x_j\})$, which is the form used in \cite{JainKravitz}. It also yields an exact algorithm for $\ML$: scan $t=m/L$ over the candidate denominators $L\in\{v_i+v_j,|v_i-v_j|,2v_i\}$ and compare the rational numbers $\min_r\dist{mv_r/L}$ exactly. This is the algorithm implemented in our code.

\subsection{The reduction to critical subtori}
We use the following results of Giri and Kravitz. Let $S_k(n)$ be the set of values of $\Dd$ on the $k$-dimensional proper subtori of $\T^n$; a subtorus $T$ of $\T^n$ has a volume $\mathrm{vol}(T)$, equal to $\sqrt{v_1^2+\dots+v_n^2}$ when $T=\langle v\rangle_\R$ with $\gcd(v_1,\dots,v_n)=1$.

\begin{theorem}[{\cite[Theorem 1.4, Lemmas 3.3, 4.2 and 4.4]{GiriKravitz}}]\label{thm:GK}
\leavevmode
\begin{enumerate}
\item[(i)] For $1\le k<n$ the set $S_k(n)$ has no lower accumulation points: for every $d\in S_k(n)$ there is $\eta>0$ with $S_k(n)\cap(d-\eta,d)=\emptyset$.
\item[(ii)] For $1\le\ell<k\le n$, $\max S_k(n)=\max S_{k-\ell}(n-\ell)$.
\item[(iii)] For every $V>0$ there are only finitely many one-dimensional subtori of $\T^n$ of volume at most $V$.
\item[(iv)] For every $\varepsilon>0$ there is a constant $C^*(n,\varepsilon)$ such that every one-dimensional subtorus $T$ of volume greater than $C^*(n,\varepsilon)$ is contained in a subtorus $U$ of dimension at least $2$ in which it is $\varepsilon$-dense for the $L^2$-norm.
\end{enumerate}
\end{theorem}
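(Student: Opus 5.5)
Theorem \ref{thm:GK} collects results quoted from \cite{GiriKravitz}, so in the paper it is justified by citation. The plan below indicates how I would reprove each part, in increasing order of difficulty, so that the reader can see which ingredients are used later.

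Part (iii) is immediate. A one-dimensional subtorus is $\langle v\rangle_\R$ for a primitive vector $v\in\Z^n$, unique up to sign, and its volume is $|v|_2$. The number of integer vectors in a ball of radius $V$ is finite.

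For part (iv) I would use the geometry of numbers on the lattice $\Lambda=v^\perp\cap\Z^n$, which has rank $n-1$ and covolume $|v|_2$.
\begin{enumerate}
\item Let $\lambda_1\le\dots\le\lambda_{n-1}$ be the successive minima of $\Lambda$. By Minkowski's second theorem their product is comparable to $|v|_2$, so if $|v|_2>C^*$ then $\lambda_{n-1}$ is large.
\item Choose the largest index $j$ with a big gap $\lambda_{j+1}/\lambda_j$. Such an index exists because the $\lambda_i$ cannot all be bounded.
\item Let $U$ be the annihilator in $\T^n$ of the span of the first $j$ minimal vectors. Then $U\supseteq T$ and $\dim U=n-j\ge2$.
\item The density of $T$ inside $U$ is governed by the covering radius of the dual lattice in the quotient. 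Transference (Banaszczyk-type bounds) turns the large gap into $\varepsilon$-density for the $L^2$-norm.
\end{enumerate}
The bookkeeping of $\varepsilon$ against the gap is where the constant $C^*(n,\varepsilon)$ comes from.

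Part (ii) has two directions.
\begin{itemize}
\item For ``$\ge$'': if $T'\subseteq\T^{n-\ell}$ is a $(k-\ell)$-dimensional subtorus, the product $T'\times\T^\ell$ is $k$-dimensional with the same $\Dd$. The full circle factors can always be placed at $1/2$.
\item For ``$\le$'': given a $k$-dimensional $U\subseteq\T^n$, take a point of $U$ nearest the centre. Then intersect $U$ with $\ell$ coordinate conditions $x_r=1/2$, chosen in general position relative to $U$. Projecting away these coordinates gives a $(k-\ell)$-dimensional subtorus of $\T^{n-\ell}$ whose $\Dd$ is at most $\Dd(U)$.
\end{itemize}
This needs care when the intersection degenerates, that is, when $U$ lies in a coordinate-type subtorus. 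I would handle that case by induction on $n$.

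Part (i) is the main obstacle. I would argue by contradiction, by induction on $n$, and within that downward in $k$.
\begin{enumerate}
\item Suppose $T_m$ are $k$-dimensional proper subtori with $\Dd(T_m)\uparrow d$ strictly.
\item By the higher-dimensional analogue of (iii), the volumes of the $T_m$ tend to infinity.
\item By (iv), or its $k$-dimensional version, each $T_m$ is $\varepsilon_m$-dense in a subtorus $U_m$ of larger dimension, with $\varepsilon_m\to0$. Hence $\Dd(U_m)\le\Dd(T_m)\le\Dd(U_m)+\varepsilon_m$.
\item By induction the values $\Dd(U_m)$, lying in $S_{k'}(n)$ with $k'>k$, have no lower accumulation points. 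So after passing to a subsequence $\Dd(U_m)=d'$ is constant, and then $d'\le d$.
\item If $d'<d$, the relation $\Dd(T_m)\to d'$ contradicts $\Dd(T_m)\uparrow d$.
\item If $d'=d$, then $\Dd(T_m)\ge d$, which contradicts the strict increase from below.
\end{enumerate}
The delicate point is to make step 3 uniform, with $U_m$ proper and the base case $k=n-1$ handled separately. This is what Lemmas 3.3, 4.2 and 4.4 of \cite{GiriKravitz} supply, so I would rely on them.
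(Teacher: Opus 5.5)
The paper gives nothing beyond the citation, so your sketches go further than it does. Most of them are sound in outline: (iii), the ``$\ge$'' half of (ii), the successive-minima argument for (iv), and the downward induction for (i).

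For (i), run the induction on the strong form, that $S_{k'}(n)$ has no lower accumulation point in $\R$ at all, because the limit $d$ need not lie in $S_{k'}(n)$. With (iv) for $k$-dimensional subtori, the induction then closes by itself starting from $S_n(n)=\{0\}$. Properness of $U_m$ is automatic since $U_m\supseteq T_m$.

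The genuine gap is the ``$\le$'' half of (ii). First, the set $U\cap\{x_r=1/2\}$ is a union of translates of a subtorus, not a subtorus. So its projection is not one of the objects whose $\Dd$-values make up $S_{k-\ell}(n-\ell)$. Second, the inequality you state points the wrong way: an upper bound for $\Dd(U)$ needs a set with $\Dd\ge\Dd(U)$.

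Correcting the inequality does not rescue the method, because translates can lie farther from the centre than every subtorus. Take $U=\langle(1,0,1),(1,1,0)\rangle_\R=\{(s+t,t,s)\}\subseteq\T^3$. The slice $x_3=1/2$ projects to $\{(1/2+t,t)\}\subseteq\T^2$, on which $\min(\dist{1/2+t},\dist{t})=\min(\tfrac12-\dist{t},\dist{t})\le\tfrac14$, with equality at $t=1/4$. So its $\Dd$ is $1/4$, while $\max S_1(2)=1/2-1/3=1/6$, and indeed $\Dd(U)\le1/6$ via $(2/3,1/3,1/3)\in U$.

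The missing idea is to slice by subgroups through the origin. It suffices to find a proper $(k-1)$-dimensional subtorus $U'\subseteq U$ lying in some $\{x_p=\varepsilon x_q\}$. Then $\Dd(U')\ge\Dd(U)$, and deleting $x_q$ maps $U'$ isomorphically onto a proper subtorus of $\T^{n-1}$ with the same $\Dd$, exactly as in Corollary~\ref{cor:distinct}. Iterating $\ell$ times gives $\max S_k(n)\le\max S_{k-\ell}(n-\ell)$.

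To find $U'$, let $\varphi_1,\dots,\varphi_n$ be the coordinate forms of $U$ on its parameter space $\R^k$; they are nonzero integral forms spanning the dual.
\begin{itemize}
\item If $\varphi_p=\pm\varphi_q$ for some $p\ne q$, then $U$ itself lies in $\{x_p=\pm x_q\}$. Any rational hyperplane other than the $\ker\varphi_r$ then gives $U'$.
\item Otherwise, fix a plane $\Pi$ spanned by two non-proportional coordinate forms. Choose $\varphi_p,\varphi_q\in\Pi$ whose directions are adjacent among the directions of the coordinate forms lying in $\Pi$. As $t$ runs over $(0,\infty)$ and over $(-\infty,0)$, the direction of $\varphi_p+t\varphi_q$ sweeps the two open arcs between them, and one of these arcs contains no coordinate form. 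So one of $\varphi_p\pm\varphi_q$ is proportional to no $\varphi_r$. Its kernel is then contained in no $\ker\varphi_r$ and defines a proper $U'$.
\end{itemize}
This properness argument is the real content of the ``$\le$'' direction, and your sketch has nothing in its place.
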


\begin{lemma}[Reduction lemma]\label{lem:reduction}
Let $n\ge4$ and assume the Lonely Runner Conjecture for $n-1$ and for $n-2$ speeds. Then there is a finite set $E_n$ of primitive near-tight $n$-tuples such that for every near-tight $n$-tuple $v\notin E_n$ the subtorus $\langle v\rangle_\R$ is contained in a two-dimensional proper subtorus $U\subseteq\T^n$ with $\Dd(U)=1/2-1/n$.
\end{lemma}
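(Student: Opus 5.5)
We work with $D=1/2-\ML$. A near-tight $n$-tuple $v$ is then a proper one-dimensional subtorus $T=\langle v\rangle_\R$ with $D(T)>1/2-1/n=:d_0$. The plan is to combine parts (i)--(iv) of Theorem \ref{thm:GK}, in the following order.

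First, locate the critical values. By (ii) with $\ell=n-2$, $k=n-1$, we have $\max S_{n-1}(n)=\max S_1(2)$. Also $\max S_2(n)=\max S_1(n-1)$, and the Lonely Runner Conjecture for $n-1$ speeds gives $\max S_1(n-1)=1/2-1/n=d_0$; the tight $(n-1)$-tuples show this maximum is attained. Similarly, for $3\le j\le n-1$, (ii) gives $\max S_j(n)=\max S_1(n-j+1)$, and for $j\ge3$ this is at most $1/2-1/(n-1)<d_0$. Here we use the conjecture for $n-2$ speeds, and monotonicity in the number of speeds for smaller counts, which follows from the conjecture itself or from (ii) again.

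So every proper subtorus of dimension $j\ge3$ has $D<d_0$, with a uniform gap $\delta:=d_0-\max_{j\ge3}\max S_j(n)>0$. Every two-dimensional proper subtorus has $D\le d_0$.

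Second, apply density. Since $D$ is $1$-Lipschitz in Hausdorff distance for the $L^\infty$-norm, and $L^\infty\le L^2$, if $T$ is $\varepsilon$-dense in $U\supseteq T$ then $D(T)\le D(U)+\varepsilon$. Fix $\varepsilon$, to be chosen. For near-tight $T$ of volume $>C^*(n,\varepsilon)$, (iv) gives $U\supseteq T$ of dimension $\ge2$ with $D(T)\le D(U)+\varepsilon$. A proper $T$ forces $U$ proper, and near-tightness gives $D(U)>d_0-\varepsilon$. Taking $\varepsilon<\delta$ excludes $\dim U\ge3$, so $\dim U=2$ and $d_0-\varepsilon<D(U)\le d_0$.

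Third, and this is the main obstacle, force $D(U)=d_0$ exactly. Here I would use (i), but applied to $S_2(n)$ at its maximum $d_0\in S_2(n)$. By (i) there is $\eta>0$ with $S_2(n)\cap(d_0-\eta,d_0)=\emptyset$. Choosing $\varepsilon<\min(\delta,\eta)$, any proper two-dimensional $U$ with $D(U)>d_0-\varepsilon$ must satisfy $D(U)=d_0$. The delicate point is that (i) is a statement about lower accumulation points of the whole set $S_2(n)$, and this is precisely what is needed. One must also check that $U$ is proper; this holds since $T\subseteq U$ and $T$ is proper.

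Finally, the near-tight tuples with volume $\le C^*(n,\varepsilon)$ are finite in number by (iii), after passing to primitive representatives; these form $E_n$. Since $\ML$ is invariant under scaling and permutation, restricting $E_n$ to primitive tuples is harmless.
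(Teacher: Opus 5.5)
Your proof is correct and follows the paper's argument essentially step for step. You use (ii) together with the conjecture for $n-1$ and $n-2$ speeds to get $\max S_2(n)=1/2-1/n$ and a gap below it for $\dim U\ge3$; then (iv) with the estimate $D(U)\ge D(T)-\varepsilon$; then (i) to force $D(U)=1/2-1/n$; and finally (iii) to make $E_n$ finite. Your explicit handling of $\dim U\ge4$, via monotonicity of $\max S_1(m)$ in $m$, is a small point the paper leaves implicit, and your gap $\delta$ coincides with the paper's choice $1/(n-1)-1/n$.
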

\begin{proof}
By Theorem \ref{thm:GK}(ii) and the conjecture for $n-1$ speeds, $\max S_2(n)=\max S_1(n-1)=1/2-1/n$; by (ii) and the conjecture for $n-2$ speeds, $\max S_3(n)=1/2-1/(n-1)$. By (i) there is $\eta>0$ with $S_2(n)\cap(1/2-1/n-\eta,1/2-1/n)=\emptyset$. Fix $\varepsilon<\min(\eta,\,1/(n-1)-1/n)$ and let $E_n$ consist of the primitive near-tight tuples $v$ with $\mathrm{vol}\langle v\rangle_\R\le C^*(n,\varepsilon)$, a finite set by (iii). For $v\notin E_n$, (iv) provides a subtorus $U\supseteq T=\langle v\rangle_\R$ of dimension at least $2$ in which $T$ is $\varepsilon$-dense. Then $U$ is proper, since $T$ is, and $\Dd(U)\ge\Dd(T)-\varepsilon>1/2-1/n-\varepsilon$, because every point of $U$ is within $L^\infty$-distance $\varepsilon$ of a point of $T$. If $\dim U\ge3$ we would have $\Dd(U)\le1/2-1/(n-1)<1/2-1/n-\varepsilon$; so $\dim U=2$ and $\Dd(U)\in S_2(n)\cap(1/2-1/n-\varepsilon,1/2-1/n]=\{1/2-1/n\}$.
\end{proof}

The hypotheses hold for $n\le 10$ by the results quoted in the introduction. Nothing in the proof is effective as written: $\eta$ is not explicit.

\subsection{Tight instances}
We use the following classifications, all up to dilation: the only tight triple is $(1,2,3)$ (Cusick, see \cite{PerarnauSerra}); the only tight quadruples are $(1,2,3,4)$ and $(1,3,4,7)$ (Chen \cite{Chen91}; see also \cite{BarajasSerra09} and \cite[Section 4]{PerarnauSerra}); the only tight quintuples are $(1,2,3,4,5)$ and $(1,3,4,5,9)$ (Bohman, Holzman and Kleitman \cite[Theorem 3]{BHK}). We write $\mathcal T_4=\{(1,2,3,4),(1,3,4,7)\}$ and $\mathcal T_5=\{(1,2,3,4,5),(1,3,4,5,9)\}$.

\section{The critical subtori}\label{sec:class}

In this section $n\in\{5,6\}$, and $U\subseteq\T^n$ is a two-dimensional proper subtorus with $\Dd(U)=1/2-1/n$, that is, $\ML(U)=1/n$. We show that $U$ is spanned by two vectors of a very restricted shape and then enumerate. The arguments use only the Lonely Runner Conjecture for at most $n-1$ speeds and the classification of the tight $(n-1)$-tuples.

\begin{lemma}\label{lem:nolower}
There is no two-dimensional proper subtorus $W\subseteq\T^{n-1}$ with $\Dd(W)=1/2-1/n$.
\end{lemma}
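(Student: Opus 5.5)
Since $W$ is proper, every point of $W$ has all coordinates nonzero, so $\Dd(W)<1/2$. The idea is to compare $\Dd(W)$ with the maximum of $S_2(n-1)$ and use that there is a gap between that maximum and the value $1/2-1/n$.

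By Theorem \ref{thm:GK}(ii) with $\ell=1$, $k=2$ and $n$ replaced by $n-1$,
\[
\max S_2(n-1)=\max S_1(n-2).
\]
By the Lonely Runner Conjecture for $n-2$ speeds (known, since $n-2\le4$), every proper one-dimensional subtorus of $\T^{n-2}$ has $\ML\ge1/(n-1)$. Hence
\[
\max S_1(n-2)=1/2-1/(n-1).
\]
Consequently $\Dd(W)\le 1/2-1/(n-1)<1/2-1/n$, and the equality $\Dd(W)=1/2-1/n$ is impossible.

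If one prefers not to invoke Theorem \ref{thm:GK}(ii) as a black box, there is a direct argument for the inequality $\ML(W)\ge1/(n-1)$. A two-dimensional proper $W$ contains one-dimensional proper subtori $\langle w\rangle_\R$ with all entries of $w$ nonzero. Such a $w$ can be chosen with two coordinates equal up to sign: pick a line in $W$ lying in the kernel of $\varphi_p-\varepsilon\varphi_q$ for two non-proportional coordinate forms. Such a line avoids the kernels of all the $\varphi_r$ provided no $\varphi_r$ is proportional to $\varphi_p-\varepsilon\varphi_q$. This needs a case check, which is the only delicate point: if all choices fail, the coordinate forms are highly degenerate, and one can use repeated coordinates directly. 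The vector $w$ then has at most $n-2$ distinct absolute speeds, so by the conjecture for at most $n-2$ speeds (and trivially for fewer) $\ML(w)\ge1/(n-1)$. Since $W\supseteq\langle w\rangle_\R$, this gives $\ML(W)\ge\ML(w)\ge1/(n-1)>1/n$.

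I expect no real obstacle beyond making sure the cited maximum statement applies, that is, that it concerns proper subtori, which it does by the definition of $S_k$.
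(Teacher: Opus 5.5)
Your main argument is correct, but it takes a different route from the paper.

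\textbf{Your route.} You apply Theorem~\ref{thm:GK}(ii) in ambient dimension $n-1$, with $k=2$ and $\ell=1$. This gives $\Dd(W)\le\max S_2(n-1)=\max S_1(n-2)\le 1/2-1/(n-1)<1/2-1/n$. The last bound needs the Lonely Runner Conjecture for \emph{at most} $n-2$ speeds, not just $n-2$, because a proper one-dimensional subtorus of $\T^{n-2}$ may have speeds repeated up to sign. Those smaller cases are known, so this is fine.

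\textbf{The paper's route.} The paper argues inside $W$ and never uses Giri--Kravitz:
\begin{itemize}
\item If $\ML(W)=1/n$, every zero-free primitive $w\in W\cap\Z^{n-1}$ has $\ML(w)\le1/n$.
\item By the conjecture for at most $n-1$ speeds, $w$ must then have exactly $n-1$ distinct absolute values and be tight.
\item By the classification of tight $(n-1)$-tuples, its entries are bounded by $9$.
\item But all but at most $n-1$ primitive directions of the rank-two lattice $W\cap\Z^{n-1}$ are zero-free, and a rank-two lattice cannot contain infinitely many primitive vectors of bounded height.
\end{itemize}

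\textbf{What each buys.} Your version is shorter. It needs neither the conjecture for $n-1$ speeds nor the classification of tight tuples. It also gives the quantitative gap $\Dd(W)\le 1/2-1/(n-1)$, rather than only $\Dd(W)<1/2-1/n$. The paper's version is elementary and keeps Section~\ref{sec:class} independent of Giri and Kravitz, as announced at its start. As a result, Theorems~\ref{thm:three} and~\ref{thm:two} rest only on the conjecture for at most $n-1$ speeds and the tight classifications.

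\textbf{Two small points.}
\begin{itemize}
\item Your opening sentence is inaccurate. Every subtorus contains the origin, and properness only means that \emph{some} point of $W$ has all coordinates nonzero. Nothing in your argument depends on this.
\item Your optional direct argument is a sketch, not a proof. The ``case check'' is genuine work: it is essentially the projective counting of Lemmas~\ref{lem:count} and~\ref{lem:plucker}, redone for $n-1\in\{4,5\}$ coordinate forms. Forms equal up to sign would be handled by your remark on repeated coordinates. It can be completed along those lines, and would then give $\ML(W)\ge 1/(n-1)$ elementarily. As written, it should either be carried out or dropped.
\end{itemize}
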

\begin{proof}
Suppose $\ML(W)=1/n$. Every primitive $w\in W\cap\Z^{n-1}$ satisfies $\ML(w)\le\ML(W)=1/n$. If $w$ has no vanishing entry and its entries take at most $n-2$ distinct absolute values, then $\ML(w)\ge1/(n-1)>1/n$ by the Lonely Runner Conjecture for at most $n-2$ speeds, a contradiction; so the entries take exactly $n-1$ distinct absolute values, $\ML(w)\ge1/n$ by the conjecture for $n-1$ speeds, hence $\ML(w)=1/n$ and $w$ is a tight $(n-1)$-tuple up to signs, with entries at most $9$ in absolute value. Now $W\cap\Z^{n-1}$ is a lattice of rank two, so it contains infinitely many primitive directions, and at most $n-1$ of them lie on the lines $\ker\varphi_r$ where an entry vanishes. Infinitely many primitive vectors of bounded height in a rank-two lattice is impossible.
\end{proof}

\begin{corollary}\label{cor:distinct}
$U$ is contained in no subspace $\{x_p=\varepsilon x_q\}$ with $p\ne q$; equivalently, the coordinate forms $\varphi_1,\dots,\varphi_n$ of $U$ are pairwise distinct up to sign.
\end{corollary}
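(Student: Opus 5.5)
Suppose, for a contradiction, that $U\subseteq\{x_p=\varepsilon x_q\}$ for some $p\ne q$ and some sign $\varepsilon$. We reduce to Lemma \ref{lem:nolower} by deleting the $q$-th coordinate. Let $\pi:\T^n\to\T^{n-1}$ be this projection and put $W:=\pi(U)$.

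First, $\pi$ is injective on $\{x_p=\varepsilon x_q\}$, since $x_q=\varepsilon x_p$ is determined by the other coordinates. So $W$ is a closed connected subgroup of $\T^{n-1}$ of dimension two, that is, a two-dimensional subtorus. Concretely, if $U=\langle u,v\rangle_\R$, then $W=\langle \pi u,\pi v\rangle_\R$, and $\pi u,\pi v$ are independent because $u_q=\varepsilon u_p$ and $v_q=\varepsilon v_p$.

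Second, $\ML$ is preserved. For $x\in U$ we have $\dist{x_q}=\dist{\varepsilon x_p}=\dist{x_p}$. Hence $\min_{r\le n}\dist{x_r}=\min_{r\ne q}\dist{x_r}$, and so $\ML(W)=\ML(U)=1/n$, i.e.\ $\Dd(W)=1/2-1/n$.

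Third, $W$ is proper. $U$ is proper, so some point of $U$ has all coordinates nonzero in $\T$, and its image in $W$ still has all coordinates nonzero. Equivalently, $\Dd(W)=\Dd(U)<1/2$.

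These three facts give a two-dimensional proper subtorus of $\T^{n-1}$ with $\Dd=1/2-1/n$, which contradicts Lemma \ref{lem:nolower}. Lemma \ref{lem:nolower} uses the Lonely Runner Conjecture for at most $n-1$ speeds and the classification of tight $(n-1)$-tuples, and these are available for $n\in\{5,6\}$.

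For the equivalent formulation, $U\subseteq\{x_p=\varepsilon x_q\}$ holds exactly when $u_p-\varepsilon u_q=v_p-\varepsilon v_q=0$, that is, when $\varphi_p=\varepsilon\varphi_q$ as integer vectors in $\Z^2$. So the containment fails for all $p\ne q$ and both signs precisely when the coordinate forms are pairwise distinct up to sign.

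The only point requiring care is the first one: $W$ must be two-dimensional, so that Lemma \ref{lem:nolower} applies. This is exactly where the hypothesis $U\subseteq\{x_p=\varepsilon x_q\}$ is used, through the injectivity of $\pi$ on that subspace.
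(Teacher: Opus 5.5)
Your proof is correct and is the paper's argument: delete the $q$-th coordinate and use injectivity on $\{x_p=\varepsilon x_q\}$ to get a two-dimensional proper subtorus $W\subseteq\T^{n-1}$ with $\Dd(W)=\Dd(U)=1/2-1/n$, which contradicts Lemma \ref{lem:nolower}. The paper leaves the equivalence with the coordinate forms ($\varphi_p=\varepsilon\varphi_q$) implicit; you spell it out, along with the independence of $\pi u,\pi v$, and both checks are sound.
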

\begin{proof}
If $U\subseteq\{x_p=\varepsilon x_q\}$, let $\pi$ delete the $q$-th coordinate. Then $\pi$ is injective on $U$, so $W=\pi(U)$ is a two-dimensional subtorus of $\T^{n-1}$, and $\Dd(W)=\Dd(U)$ because $\dist{x_q}=\dist{x_p}$ on $U$; in particular $W$ is proper. This contradicts Lemma \ref{lem:nolower}.
\end{proof}

\begin{lemma}\label{lem:shape}
Let $w\in U\cap\Z^n$ be primitive, with no vanishing entry and with $|w_p|=|w_q|$ for some $p\ne q$. Then the absolute values of the entries of $w$ take exactly $n-1$ distinct values, which form a tight $(n-1)$-tuple in $\mathcal T_{n-1}$.
\end{lemma}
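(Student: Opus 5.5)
The plan is to squeeze $\ML(w)$ between $\ML(U)=1/n$ from above and the Lonely Runner Conjecture for fewer speeds from below, as in the proof of Lemma \ref{lem:nolower}. Since $w\in U$, the one-dimensional subtorus $\langle w\rangle_\R$ is contained in $U$, so $\ML(w)=\ML(\langle w\rangle_\R)\le\ML(U)=1/n$.

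Next we pass to the distinct absolute values. Let $m$ be the number of distinct values among $|w_1|,\dots,|w_n|$. Since $|w_p|=|w_q|$ with $p\ne q$, we have $m\le n-1$, and $m\ge1$ because no entry vanishes. Because $\dist{tw_r}$ depends only on $|w_r|$, the loneliness of $w$ equals that of the $m$-tuple $a=(a_1,\dots,a_m)$ of distinct positive integers formed by these absolute values; so $\ML(a)=\ML(w)\le 1/n$. The case $m=1$ is excluded at once, since then $\ML(a)=1/2>1/n$.

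If $2\le m\le n-2$, the Lonely Runner Conjecture for $m$ speeds gives $\ML(a)\ge 1/(m+1)\ge 1/(n-1)>1/n$, a contradiction. This uses the conjecture for at most $n-2\le 4$ speeds, which is known. Hence $m=n-1$. The conjecture for $n-1$ speeds then gives $\ML(a)\ge1/n$, and together with the upper bound, $\ML(a)=1/n$, so $a$ is tight.

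It remains to show that $a$ is exactly an element of $\mathcal T_{n-1}$ and not merely a dilate of one. The classification says $a=d\,\tau$ with $\tau\in\mathcal T_{n-1}$ and $d=\gcd(a_1,\dots,a_{n-1})$. Every entry of $w$ is $\pm$ some $a_i$, so $\gcd(w)=d$, and primitivity of $w$ forces $d=1$. This dilation and primitivity bookkeeping is the only point needing care; I expect no real obstacle, since the argument is a direct repetition of the squeeze in Lemma \ref{lem:nolower}.
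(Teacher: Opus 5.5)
Your proposal is correct and follows the paper's own argument: you bound $\ML(w)\le\ML(U)=1/n$, rule out at most $n-2$ distinct absolute values using the Lonely Runner Conjecture for fewer speeds, and conclude that exactly $n-1$ values give a tight instance. Your use of primitivity to remove the dilation factor, so that the tuple lies in $\mathcal T_{n-1}$ itself, is a detail the paper leaves implicit.
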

\begin{proof}
As in Lemma \ref{lem:nolower}: $\ML(w)\le1/n$, at most $n-2$ distinct values would give $\ML(w)\ge1/(n-1)$, and exactly $n-1$ distinct values give a tight instance.
\end{proof}

We call such a $w$ a \emph{tight repeat vector}. The line $\ker(\varphi_p-\varepsilon\varphi_q)$ in the parameter plane is a \emph{repeat line}; by Corollary \ref{cor:distinct} the form $\varphi_p-\varepsilon\varphi_q$ is nonzero, so a repeat line is a genuine line. A repeat line is \emph{free} if it is none of the $n$ vanishing lines $\ker\varphi_r$. The primitive vector of $U$ on a free repeat line is a tight repeat vector, and two non-parallel tight repeat vectors span $U$. The next three lemmas show that $U$ always has two free repeat lines.

\begin{lemma}\label{lem:count}
Let $d$ be the number of classes of the coordinate forms under proportionality, and let $m_1,\dots,m_d$ be the sizes of the classes. For any two classes $k\ne l$ the number of free repeat lines is at least $2\max(m_k,m_l)-(d-2)$. Consequently $U$ has at least two free repeat lines when $n=6$ and $d\le4$, or $n=5$ and $d\le4$; and at least one when $n=6$ and $d=5$.
\end{lemma}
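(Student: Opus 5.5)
The approach is to fix two classes and count the distinct kernel lines of the forms $\varphi_p-\varepsilon\varphi_q$ with $\varphi_p$ in one class and $\varphi_q$ in the other. These lines are genuine repeat lines, and only a few of them can coincide with vanishing lines.

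\emph{Setup.} Since $U$ is proper, no coordinate form is zero. Let $\psi$ and $\chi$ be primitive integer representatives of the two classes $k$ and $l$. They are linearly independent because the classes are distinct. Every form in class $k$ can be written $a\psi$ with $a\in\Z\setminus\{0\}$, and every form in class $l$ as $b\chi$. By Corollary \ref{cor:distinct}, the $m_k$ forms of class $k$ are pairwise distinct up to sign, so their coefficients $a$ have $m_k$ pairwise distinct absolute values $|a_1|,\dots,|a_{m_k}|$.

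\emph{Counting distinct lines.} Fix one form $b\chi$ in class $l$. Consider the $2m_k$ forms $a_i\psi-\varepsilon b\chi$ with $1\le i\le m_k$ and $\varepsilon\in\{\pm1\}$. Since $\psi,\chi$ are independent, the kernel of $\alpha\psi+\beta\chi$ (with $\alpha,\beta$ not both zero) is determined by the ratio $\beta/\alpha$, and different ratios give different lines. Here the ratios are $-\varepsilon b/a_i$. As $\varepsilon$ varies these take the values $\pm|b|/|a_i|$, and these $2m_k$ numbers are pairwise distinct, because the $|a_i|$ are distinct and $b\ne0$. So we obtain $2m_k$ distinct repeat lines, each a genuine line by Corollary \ref{cor:distinct}.

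\emph{Bounding the non-free ones.} A vanishing line is the kernel of some class representative. Both coefficients of $a_i\psi-\varepsilon b\chi$ are nonzero, so this form is proportional neither to $\psi$ nor to $\chi$. Hence none of our lines equals $\ker\psi$ or $\ker\chi$. There are $d$ distinct vanishing lines in total, so at most $d-2$ of our lines are vanishing lines. This leaves at least $2m_k-(d-2)$ free repeat lines. Exchanging the roles of $k$ and $l$ gives the bound with $m_l$, hence with $\max(m_k,m_l)$.

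\emph{Consequences.} First, $d\ge2$: if $d=1$, all forms would be proportional and $U$ would be one-dimensional. So we may choose $k$ to be a largest class and $l$ any other class. Then $\max(m_k,m_l)\ge\lceil n/d\rceil$, and the bound $2\lceil n/d\rceil-(d-2)$ gives the following.
\begin{itemize}
\item For $n=6$: with $d=2,3,4$ it is $6,3,2$, and with $d=5$ it is $1$.
\item For $n=5$: with $d=2,3,4$ it is $6,3,2$.
\end{itemize}
These are exactly the stated consequences.

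The only delicate point is the distinctness of the $2m_k$ ratios. This needs the coefficients in a class to have distinct absolute values, which is exactly what Corollary \ref{cor:distinct} supplies.
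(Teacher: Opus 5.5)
Your proof is correct and follows the paper's argument: write each class as nonzero multiples of one representative, and use Corollary \ref{cor:distinct} to get pairwise distinct absolute coefficients, so that the $2m$ ratios $\pm c_q/c_p$ give distinct repeat lines avoiding the kernels of the two chosen classes. Then discard at most $d-2$ vanishing lines and apply pigeonhole to the class sizes; your explicit check that $d\ge2$ (otherwise $U$ would be one-dimensional) is a small point the paper leaves implicit.
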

\begin{proof}
Write $\varphi_r=c_r\psi_{\kappa(r)}$ with $\psi_1,\dots,\psi_d$ pairwise non-proportional and $c_r\ne0$. By Corollary \ref{cor:distinct} the numbers $|c_r|$ are pairwise distinct within each class. For $p$ in class $k$ and $q$ in class $l$, the repeat line $\ker(\varphi_p-\varepsilon\varphi_q)=\ker(\psi_k-\rho\psi_l)$ with $\rho=\varepsilon c_q/c_p$. The map $\rho\mapsto\ker(\psi_k-\rho\psi_l)$ is injective on $\Q^\times$ and never produces $\ker\psi_k$ or $\ker\psi_l$. For fixed $p$ the $2m_l$ values $\pm c_q/c_p$ are distinct, and for fixed $q$ the $2m_k$ values $\pm c_q/c_p$ are distinct, so at least $2\max(m_k,m_l)$ distinct repeat lines arise from the pair of classes, and at most $d-2$ of them are vanishing lines. Since $m_1+\dots+m_d=n$, some class has size at least $n/d$; for $n=6$ this gives at least $6,3,2,1$ free lines for $d=2,3,4,5$, and for $n=5$ at least $6,3,2$ for $d=2,3,4$.
\end{proof}

\begin{lemma}\label{lem:plucker}
Suppose the coordinate forms of $U$ are pairwise non-proportional ($d=n$). Then $U$ has a free repeat line.
\end{lemma}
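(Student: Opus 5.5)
The idea is to argue in the plane $\Q^2\subset\R^2$ of coordinate forms, using the geometry of directions rather than any arithmetic. Recall that a repeat line is $\ker(\varphi_p-\varepsilon\varphi_q)$. For a nonzero linear form $\psi$ on the parameter plane, $\ker\psi=\ker\varphi_r$ holds exactly when $\psi$ is proportional to $\varphi_r$. So it suffices to find $p\ne q$ and a sign $\varepsilon$ such that $\psi:=\varphi_p-\varepsilon\varphi_q$ is nonzero and proportional to none of $\varphi_1,\dots,\varphi_n$.

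Step 1 sets up the rays. Since $d=n$, the forms $\varphi_r$ are pairwise non-proportional. Hence the $2n$ vectors $\pm\varphi_1,\dots,\pm\varphi_n$, viewed in $\R^2$, point in $2n$ pairwise distinct directions. Order these rays cyclically by angle. The configuration is centrally symmetric, and there are at least $4$ rays, since $n\ge2$. Therefore two cyclically consecutive rays make an angle strictly less than $\pi$.

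Step 2 chooses the pair. Take two consecutive rays; they are of the form $\sigma\varphi_p$ and $\tau\varphi_q$ with $p\ne q$. They cannot both come from the same $r$, because $\varphi_r$ and $-\varphi_r$ are opposite and hence not adjacent when $n\ge2$. Put $\varepsilon=-\sigma\tau$, so that $\sigma\psi=\sigma\varphi_p+\tau\varphi_q$. Because $\varphi_p,\varphi_q$ are linearly independent, $\psi\ne0$. The sum of two vectors spanning an angle less than $\pi$ lies strictly inside that angle. So the ray of $\sigma\psi$ lies strictly between two consecutive rays of the configuration, and by central symmetry the same holds for the opposite ray $-\sigma\psi$. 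Consequently neither direction of the line $\R\psi$ belongs to the set $\{\pm\varphi_r\}$, that is, $\psi$ is proportional to no $\varphi_r$.

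Step 3 concludes. By Step 2, $\ker\psi$ is a repeat line distinct from every vanishing line $\ker\varphi_r$, so it is free. I do not expect a real obstacle here. The only point needing care is the adjacency claim in Steps 1 and 2: that $2n\ge4$ distinct rays with central symmetry always contain a consecutive pair at angle less than $\pi$ coming from distinct indices. This is immediate, since the angle between consecutive rays is at most $\pi$, with equality only when there are just two rays.
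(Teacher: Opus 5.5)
Your proof is correct, and it takes a genuinely different and more general route than the paper's.

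The paper argues by contradiction on the projective line of directions. After a $\mathrm{GL}_2(\Q)$ change of coordinates it pins $\varphi_1,\varphi_2$ at $0,\infty$. The pair $\{1,2\}$ then forces two further forms at $\pm1$, written $c(1,1)$ and $d(1,-1)$. The pairs involving these forms force new points, which requires $c,d\in\{\pm1,\pm\tfrac12\}$ and then adds $\{2,\tfrac12\}$ or $\{3,\tfrac13\}$ together with their negatives. The argument concludes by running out of forms, since only $n-4\le2$ remain. This is a small case analysis, also verified by the code, and it genuinely uses $n\le6$.

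Your argument works differently. You take two cyclically adjacent rays among $\pm\varphi_1,\dots,\pm\varphi_n$. Their angle is $<\pi$ because the configuration is centrally symmetric with at least four rays. Their sum, and by central symmetry its negative, then fall strictly inside gaps between adjacent rays.

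This needs no counting and no computation. It proves the stronger statement that any $n\ge2$ pairwise non-proportional vectors in the plane admit $p\ne q$ and a sign $\varepsilon$ with $\varphi_p-\varepsilon\varphi_q$ nonzero and proportional to none of them. It also uses nothing about $U$ beyond $d=n$, so it would carry over verbatim to the classification for any $n$, whereas the paper's case analysis would grow with $n$.

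The only extra ingredient in your route is the cyclic order of directions in $\R^2$. This is harmless here, because the coordinate forms are integer vectors and proportionality over $\R$ and over $\Q$ coincide.
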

\begin{proof}
Suppose not. Then for every $p<q$ and every sign $\varepsilon$ the form $\varphi_p-\varepsilon\varphi_q$ is proportional to some $\varphi_m$, and $m\notin\{p,q\}$ since $d=n$. Change coordinates in the parameter plane by an element of $\mathrm{GL}_2(\Q)$; this multiplies all determinants $\det(\varphi_i,\varphi_j)$ by the same nonzero constant and preserves all proportionalities, so we may assume $\varphi_1=(1,0)$ and $\varphi_2=(0,1)$. Describe a nonzero form $(\alpha,\beta)$ by the point $\xi=\beta/\alpha$ of the projective line, so that $\varphi_1,\varphi_2$ sit at $\xi=0,\infty$. The pair $\{1,2\}$ forces $(1,-1)$ and $(1,1)$ to be proportional to two further forms, which we label $\varphi_3=c(1,1)$ and $\varphi_4=d(1,-1)$ with $c,d\ne0$; they sit at $\xi=1,-1$. The pairs $\{1,3\}$ and $\{2,3\}$ force the four points
\[
t=\frac{c-1}{c},\quad \frac1t,\quad s=\frac{1+c}{c},\quad \frac1s
\]
to be occupied by coordinate forms other than $\varphi_1,\varphi_3$ (respectively $\varphi_2,\varphi_3$). One checks directly that each of $t,1/t,s,1/s$ lies in $\{0,\infty,1,-1\}$ only when $c\in\{1,\tfrac12,-1,-\tfrac12\}$, and that for $c$ outside this set the four points are distinct. Since only $n-4$ forms remain, and $n-4\le2$, we must have $c\in\{\pm1,\pm\tfrac12\}$, and then the pairs $\{1,3\},\{2,3\}$ require the two new points $\{2,\tfrac12\}$ (if $c=\pm1$) or $\{3,\tfrac13\}$ (if $c=\pm\tfrac12$). Symmetrically $d\in\{\pm1,\pm\tfrac12\}$ and the pairs $\{1,4\},\{2,4\}$ require $\{-2,-\tfrac12\}$ or $\{-3,-\tfrac13\}$. For $n=5$ two new points are required and one form remains; for $n=6$ four distinct new points are required and two forms remain. In both cases this is impossible.
\end{proof}

\begin{figure}[htbp]
\centering
\includegraphics[width=0.78\textwidth]{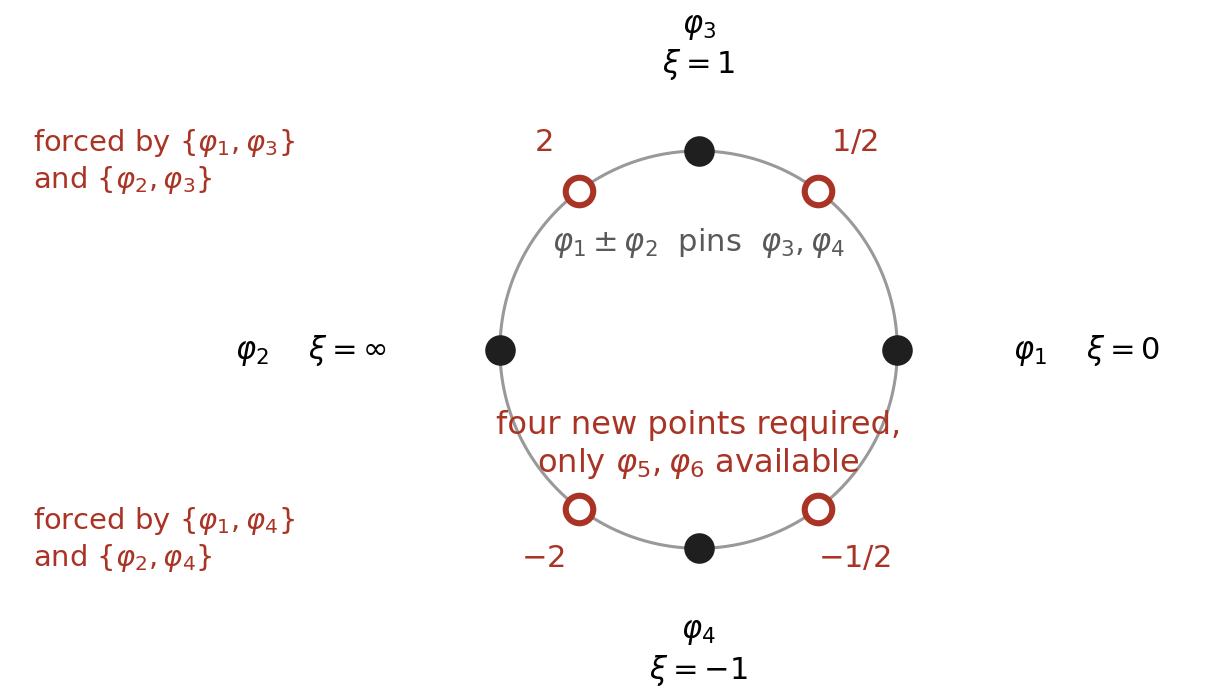}
\caption{The projective line of directions in the proof of Lemma \ref{lem:plucker}, drawn for $c=\pm1$, $d=\pm1$ and $n=6$: after $\varphi_1,\dots,\varphi_4$ are pinned at $0,\infty,1,-1$, the pairs $\{1,3\},\{2,3\}$ force the points $2,1/2$ and the pairs $\{1,4\},\{2,4\}$ the points $-2,-1/2$, while only two forms are left.}
\end{figure}

\begin{lemma}\label{lem:onetwo}
If $U$ has one free repeat line, it has two.
\end{lemma}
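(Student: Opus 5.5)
The plan is to argue by contradiction, reusing the projective-line bookkeeping of Lemmas \ref{lem:count} and \ref{lem:plucker}. Suppose $U$ has exactly one free repeat line $\ell_0=\ker(\varphi_{p_0}-\varepsilon_0\varphi_{q_0})$. Then every other repeat line must coincide with a vanishing line $\ker\varphi_m$. By Lemma \ref{lem:count}, having at most one free line already forces $d\ge5$, so only three cases remain: $(n,d)=(5,5)$, $(6,5)$ and $(6,6)$. In each case I will show that the constraints "every repeat line except $\ell_0$ is a vanishing line" require more coordinate forms than are available.

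\emph{The case $d=n$.} I rerun the proof of Lemma \ref{lem:plucker}, but now tolerate one exceptional pair, namely the pair that produces $\ell_0$. First choose a pair $\{1,2\}$ disjoint from, or at least different from, $\{p_0,q_0\}$. Normalise by $\mathrm{GL}_2(\Q)$ so that $\varphi_1,\varphi_2$ sit at $\xi=0,\infty$. Then $\xi=\pm1$ must both be occupied by forms $\varphi_3,\varphi_4$, unless one of the two repeat lines of the pair $\{1,2\}$ is $\ell_0$.

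If the exceptional pair is not $\{1,2\}$, the pairs $\{1,3\},\{2,3\},\{1,4\},\{2,4\}$ each force two points, exactly as in Lemma \ref{lem:plucker}. At most one of these forced points can be waived, since only $\ell_0$ is allowed to escape. The point count from Lemma \ref{lem:plucker} ($4$ new points needed against $n-4\le2$ forms left) has enough slack that it survives losing one point. So the contradiction persists, after rechecking the exceptional values $c,d\in\{\pm1,\pm\tfrac12\}$.

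If every choice of the normalising pair meets the exceptional pair, I instead normalise at a pair disjoint from $\{p_0,q_0\}$; this exists since $n\ge5$.

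\emph{The case $n=6$, $d=5$.} Here exactly one class has size two, say $\varphi_5=c\varphi_6$ with $|c|\ne1$ by Corollary \ref{cor:distinct}. The proof of Lemma \ref{lem:count} applied to this class and any other class $l$ gives at least $2\cdot2-3=1$ free line. I refine this count: for each of the four other classes $l$ it gives $4$ distinct repeat lines $\ker(\psi_5-\rho\psi_l)$, of which at most three are vanishing lines. Running over two different classes $l\ne l'$ produces free lines that are generally different; two repeat lines from different class-pairs coincide only if they meet at a common point of the projective line. So a single $\ell_0$ would have to absorb the excess from all four classes simultaneously. I expect this to pin $\ell_0$ and the positions of the other $\psi$'s to a few configurations, to be killed by the same forced-point argument.

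\emph{The main obstacle.} The hardest part is making the "one exception" bookkeeping rigorous without an explosion of cases, especially for $n=5$, where the slack in Lemma \ref{lem:plucker} is smallest (two points are needed against one form). If the pure counting is too tight there, I would use the extra information that the primitive vector $w$ on $\ell_0$ is a tight repeat vector (Lemma \ref{lem:shape}). Its absolute values form a tuple in $\mathcal T_{n-1}$, so $w$ is one of finitely many explicit vectors up to signs and permutations. I would write $U=\langle w,v\rangle_\R$ and treat the forms $(w_r,v_r)$ with $w$ known: the conditions $\ker(\varphi_p\mp\varphi_q)=\ker\varphi_m$ then become linear conditions on $v$. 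These can be checked to be incompatible for every such $w$ by a short exact enumeration, as the paper does elsewhere.
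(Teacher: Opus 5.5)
There is a genuine gap: neither of the two cases you reduce to is actually closed. In the case $d=n$ you conflate the exceptional triple with the exceptional line. What may escape is a single point $\xi_0$ of the projective line, the direction of $\ell_0$, and several triples $(p,q,\varepsilon)$ can produce it. So a pair $\{1,2\}$ different from, or even disjoint from, $\{p_0,q_0\}$ may still have $\ell_0$ as one of its repeat lines, and then only one of $\xi=\pm1$ is forced to be occupied. Your sentence about normalising at a disjoint pair does not address this.

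You need a pair neither of whose repeat lines is $\ell_0$. Such a pair does exist, but you have to prove it. Suppose every pair had $\ell_0$ among its repeat lines, and choose coordinates with $\ell_0=\ker(0,1)$. Then all forms have first coordinate equal up to sign, so after sign changes $\varphi_i=(1,y_i)$ with distinct $y_i$. The repeat lines $\ker(2,y_i+y_j)\neq\ell_0$ must then be vanishing lines, which forces the finite set $\{y_i\}$ to be closed under midpoints, which is impossible.

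With a good pair the count closes, but only if you use $c$ and $d$ together. A value $c\notin\{\pm1,\pm\tfrac12\}$ gives four distinct new points against at most $n-3\le3$ slots (the remaining forms plus $\xi_0$), and likewise for $d$. So $c,d\in\{\pm1,\pm\tfrac12\}$, and then the four distinct points of $N_c\cup N_d$ again exceed $n-3$ slots. For $n=5$, the pairs $\{1,3\},\{2,3\}$ alone give only two points against one form plus $\xi_0$, so they yield no contradiction by themselves.

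The case $(n,d)=(6,5)$ is not proved at all: ``I expect this to pin $\ell_0$'' is a hope, not an argument. Here is the missing step. For each singleton class $l$, the four repeat lines coming from the size-two class and $l$ are distinct and avoid $\ker\varphi_5$ and $\ker\varphi_l$. With a single free line they must therefore be exactly the other three vanishing lines together with $\ell_0$. Normalise $\varphi_5=(1,0)$, $\varphi_6=(c,0)$ and use the coordinate $\eta(\alpha,\beta)=\alpha/\beta$. These four points are then $a_l\pm h_l$ and $a_l\pm c h_l$, where $a_l=\eta(\varphi_l)$. Hence the finite set $\{a_1,\dots,a_4,\eta_0\}$ is invariant under reflection in every $a_l$, and so under a nonzero translation, which is absurd.

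Your fallback in the last paragraph is exactly the paper's proof. It fixes the tight repeat vector $w$ on the given line (ten canonical choices for $n=6$, eight for $n=5$, by Lemma \ref{lem:shape}). It writes ``every other repeat line is a vanishing line'' as ``$z$ lies in a union of $n$ hyperplanes'' for each of the $2\binom n2-1$ remaining triples. It then shows by exact branch-and-prune, discarding branches contained in $\Q w$, that the intersection is empty. Left unexecuted, that plan does not close the gap. Completed as above, your projective argument would instead give a computer-free alternative to the paper's enumeration.
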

\begin{proof}
Let $w$ be the tight repeat vector on the given line. By Lemma \ref{lem:shape}, after a signed permutation of the coordinates, $w=(a,a,b_3,\dots,b_n)$ with $\{a,b_3,\dots,b_n\}\in\mathcal T_{n-1}$ and $b_3<\dots<b_n$; there are $10$ such $w$ for $n=6$ and $8$ for $n=5$. Let $z$ be any second generator, so that $\varphi_r=(w_r,z_r)$. Because the $|w_r|$ are distinct except for $|w_1|=|w_2|$, among the $2\binom n2$ triples $(p,q,\varepsilon)$ only $(1,2,+)$ has $w_p-\varepsilon w_q=0$, and its repeat line is the line of $w$. If no other repeat line were free, then for each of the remaining $2\binom n2-1$ triples the line $\ker(\varphi_p-\varepsilon\varphi_q)$ would coincide with some $\ker\varphi_r$, that is
\[
(w_p-\varepsilon w_q)z_r-w_r z_p+\varepsilon w_r z_q=0\qquad\text{for some } r .
\]
For a fixed triple this says that $z$ lies in the union of $n$ hyperplanes of $\Q^n$. We compute the intersection of these $2\binom n2-1$ unions exactly by branching on $r$, keeping every intermediate solution space in reduced row echelon form so that coinciding branches merge, and discarding a branch as soon as its solution space is contained in $\Q w$ (there $U$ would not be two-dimensional). The recursion terminates with the empty set for each of the $10$ (respectively $8$) vectors $w$. So a second free repeat line exists.
\end{proof}

The computation in Lemma \ref{lem:onetwo} is exact, over $\Q$, and takes a few seconds; the ten (eight) case lists are exhaustive by Lemma \ref{lem:shape}.

\begin{theorem}\label{thm:three}
Up to permutations and sign changes of the coordinates, the two-dimensional proper subtori $U\subseteq\T^6$ with $\Dd(U)=1/3$ are $\UA$, $\UB$ and $\UC$.
\end{theorem}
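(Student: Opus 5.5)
First I show that $U$ is spanned by two tight repeat vectors. Lemma \ref{lem:count} gives two free repeat lines when $d\le4$. When $d=5$ it gives at least one, and when $d=6$ Lemma \ref{lem:plucker} gives at least one; in both cases Lemma \ref{lem:onetwo} upgrades this to two. The primitive vectors $w,w'$ of $U$ on two distinct free repeat lines are non-parallel tight repeat vectors, so $U=\langle w,w'\rangle_\R$. By Lemma \ref{lem:shape}, after a signed permutation each of them has the form $(a,a,b_3,\dots,b_6)$, with the absolute values forming a dilate of a tuple in $\mathcal T_5$. Since $w$ and $w'$ are primitive, their entries are bounded by $9$ in absolute value. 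Up to the simultaneous signed permutation, there are therefore finitely many pairs $(w,w')$. Concretely, $w$ is one of the $10$ normal forms from the proof of Lemma \ref{lem:onetwo}, and $w'$ is any signed permutation of any of the $10$ shapes.

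Second, I enumerate these pairs exactly. For each pair I form the saturation $\langle w,w'\rangle_\R\cap\Z^6$ via Hermite or Smith normal form, and discard it if $U$ fails Corollary \ref{cor:distinct} or is not proper. I then compute $\ML(U)$ exactly and keep only those with $\ML(U)=1/6$.

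To compute $\ML(U)$ for a two-dimensional subtorus, I use the analogue of Lemma \ref{lem:pairtimes}. The function $x\mapsto\min_r\dist{x_r}$ on $\R^2/\Z^2$ is piecewise linear, and its maximum is attained at a vertex of the arrangement of lines $\varphi_r(s,t)\in\frac12\Z$ and $\varphi_p\pm\varphi_q\in\Z$. Equivalently, the maximum lies where three runners tie. These vertices are finitely many rational points, so $\ML(U)$ can be computed exactly. A cheaper route is to note that $\ML(U)\ge\ML(w)$ automatically. It then suffices to check $\ML(U)\le 1/6$, which means $\ML(T)\le1/6$ for all one-dimensional subtori; this amounts to the same vertex scan.

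Third, I identify the survivors. I reduce each survivor to a canonical form up to signed permutation, for instance as the lexicographically minimal reduced basis over the action of the hyperoctahedral group on coordinates and $\mathrm{GL}_2(\Z)$ on generators. I then check that exactly three classes remain, represented by $\UA$, $\UB$ and $\UC$. For the converse direction I verify directly that each of the three has $\ML=1/6$. The value is at least $1/6$ because each contains a tight quintuple, such as $(1,2,3,4,5,0)+e_6$-type vectors. It is at most $1/6$ by the vertex computation. For $\UA$ and $\UB$ this is also the standard bound $\ML(\langle V\rangle\times\T)=\ML(V)$.

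The main obstacle is keeping the enumeration honest. The pair list must be genuinely exhaustive, which includes sign choices on the repeated coordinate and the positions of the repeat. The exact computation of $\ML(U)$ must be certified, so I must justify the vertex principle for two-dimensional subtori. I would prove the vertex principle as in Lemma \ref{lem:pairtimes}: at a local maximum of a piecewise linear concave-on-cells function, the active constraints must have gradients whose convex hull contains $0$. In the plane this needs at least three active runners or an antipodal constraint, which pins down a vertex.
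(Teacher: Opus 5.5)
Your proposal is correct and follows essentially the paper's route. You use the same reduction via Lemmas \ref{lem:count}, \ref{lem:plucker} and \ref{lem:onetwo} to two non-parallel tight repeat vectors, the same exhaustive enumeration (one of the $10$ normal forms against all $115\,200$ tight repeat vectors up to sign), an exact decision of whether $\ML(U)=1/6$, and canonical forms under signed permutations, which yield the three orbits $\UA,\UB,\UC$. The only real difference is the exact decision subroutine. You scan the vertices of the arrangement $\{\varphi_r\in\frac12\Z\}\cup\{\varphi_p\pm\varphi_q\in\Z\}$, whereas the paper first discards all but $190$ pairs by exhibiting a rational witness point with denominator $N=102$ and then decides the survivors by a sweep in $s$. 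Your vertex principle is valid, but the clean justification is that $\min_r\dist{\varphi_r}$ is affine on each bounded cell of that arrangement. Your ``three active runners'' heuristic does not by itself pin down a vertex when two proportional forms tie along a segment.
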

\begin{proof}
By Lemmas \ref{lem:count}, \ref{lem:plucker} and \ref{lem:onetwo}, $U$ contains two non-parallel tight repeat vectors, hence is spanned by them. A tight repeat vector is a quintuple of $\mathcal T_5$ with one value repeated in two of the six positions, with arbitrary signs; there are $115\,200$ of them up to a global sign, and up to signed permutations the first vector may be taken to be one of the ten canonical $w=(a,a,b_3,\dots,b_6)$. Of the $1\,151\,990$ non-parallel pairs, a rational-witness filter discards all but $190$: for each discarded pair $(w_1,w_2)$ the code exhibits a point $(p/N,q/N)$, $N=102$, at which all six coordinates of $pw_1+qw_2$ have $\dist{\cdot}>1/6$, which certifies $\ML(U)>1/6$. Each of the $190$ survivors is then decided exactly by a sweep: for fixed $s$ the set of $t$ with all $\dist{su_r+tv_r}>1/6$ is an intersection of finite unions of open intervals with rational endpoints, whose combinatorics changes only at finitely many rational $s$, so testing one $s$ in each gap decides whether $\ML(U)>1/6$. All $190$ have $\ML(U)=1/6$ (the inequality $\ML(U)\ge1/6$ holds because $U$ contains a tight quintuple). They span $22$ distinct planes, which fall into $3$ orbits under the $46\,080$ signed permutations, represented by $\UA$, $\UB$ and $\UC$; the representatives are identified by computing a canonical form of each orbit. Conversely $\Dd(\UA)=\Dd(\UB)=\Dd(\UC)=1/3$ by the same decision procedure.
\end{proof}

The generators given for $\UA,\UB,\UC$ satisfy \eqref{eq:saturated}; for $\UC$ this is the basis used in \cite[Section 6]{JainKravitz}, where the subtorus is called $U^7$.

\begin{theorem}\label{thm:two}
Up to permutations and sign changes of the coordinates, the two-dimensional proper subtori $U\subseteq\T^5$ with $\Dd(U)=3/10$ are $\langle(1,2,3,4,0),e_5\rangle_\R$ and $\langle(1,3,4,7,0),e_5\rangle_\R$.
\end{theorem}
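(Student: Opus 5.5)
The proof follows the proof of Theorem \ref{thm:three}, now with $n=5$. The inputs are the Lonely Runner Conjecture for at most four speeds and the classification $\mathcal T_4=\{(1,2,3,4),(1,3,4,7)\}$. The first step is to show that every two-dimensional proper $U\subseteq\T^5$ with $\Dd(U)=3/10$ is spanned by two non-parallel tight repeat vectors. Let $d$ be the number of proportionality classes of the coordinate forms. If $d\le4$, Lemma \ref{lem:count} with $n=5$ already gives two free repeat lines. If $d=5$, Lemma \ref{lem:plucker} gives at least one free repeat line, and Lemma \ref{lem:onetwo} (the $n=5$ branch, with its $8$ canonical vectors $w$) upgrades this to two. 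The primitive vectors on two distinct free repeat lines are non-parallel tight repeat vectors, and by \eqref{eq:saturated}-type reasoning they span $U$ over $\R$.

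The second step is a finite enumeration. By Lemma \ref{lem:shape}, a tight repeat vector in $\T^5$ is a quadruple of $\mathcal T_4$ with one value repeated in two of the five positions, with arbitrary signs. Up to signed permutations the first vector is one of the $8$ canonical $w=(a,a,b_3,b_4,b_5)$. The second vector ranges over all tight repeat vectors up to global sign, which is a few thousand. For each non-parallel pair, I first run the rational-witness filter. This means searching a grid $(p/N,q/N)$ for a point where all five coordinates of $pw_1+qw_2$ satisfy $\dist{\cdot}>1/5$; finding one certifies $\ML(U)>1/5$ and discards the pair. The survivors are decided exactly by the sweep in $s$ described in the proof of Theorem \ref{thm:three}. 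There, for fixed $s$, the admissible $t$ form a finite union of rational intervals whose combinatorics changes at finitely many rational $s$, so testing one $s$ per gap suffices. For each survivor the reverse inequality $\ML(U)\ge1/5$ is automatic, because $U$ contains a tight quadruple.

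The third step is to compute the planes spanned by the surviving pairs, reduce them to canonical forms under the $2^5\cdot5!=3840$ signed permutations, and check that exactly two orbits remain. These should be $\langle(1,2,3,4,0),e_5\rangle_\R$ and $\langle(1,3,4,7,0),e_5\rangle_\R$. For the converse, note that $\ML\ge1/5$ on both planes, because each contains a tight quadruple padded with a free coordinate. Moreover, $\ML\le1/5$ holds there: at any point the first four coordinates already lie on the one-dimensional subtorus of a tight quadruple, which has $\ML=1/5$, so some $\dist{x_r}\le1/5$. This argument needs no computation at all.

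I expect the main obstacle to be making the enumeration genuinely exhaustive and certified, rather than any conceptual step. In particular, Lemma \ref{lem:onetwo} for $n=5$ rests on the exact branching computation over $\Q$, so I would rerun it as part of this proof. A purely theoretical alternative would be to classify directly which planes through two tight repeat vectors avoid the open cube $\{\dist{x_r}>1/5\}$; I would not attempt that, and would instead rely on the exact computer check with the witness/sweep certification.
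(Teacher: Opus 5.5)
Your proposal is correct and follows the paper's proof, which reruns the proof of Theorem~\ref{thm:three} with $\mathcal T_4$ in place of $\mathcal T_5$. That proof has the same steps as yours: two tight repeat vectors from Lemmas~\ref{lem:count}, \ref{lem:plucker} and \ref{lem:onetwo}, the $8$ canonical first vectors paired with all $7\,680$ tight repeat vectors, the witness filter and exact sweep, and orbits under the $3\,840$ signed permutations. The only deviation is your converse, where you replace the paper's decision procedure with the valid and simpler observation that the first four coordinates of any point of $\langle (V,0),e_5\rangle_\R$ lie on $\langle V\rangle_\R$, where $\ML(V)=1/5$.
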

\begin{proof}
Identical, with $\mathcal T_4$ in place of $\mathcal T_5$: there are $7\,680$ tight repeat vectors up to sign and $8$ canonical first vectors; the witness filter (with $N=60$) leaves $116$ pairs, of which $112$ have $\ML(U)=1/5$; they span $16$ planes forming $2$ orbits under the $3\,840$ signed permutations.
\end{proof}

\section{The one-very-fast-runner subtori}\label{sec:fast}

Let $n\in\{5,6\}$, $V\in\mathcal T_{n-1}$, and $U_V=\langle(V,0),e_n\rangle_\R$. The generators satisfy \eqref{eq:saturated}, so the proper one-dimensional subtori of $U_V$ with $n$ distinct speeds are exactly the tuples $bV\cup\{c\}$ with $b,c\ge1$ coprime and $c\notin bV$. We need two facts about the four tight instances in $\mathcal T_4\cup\mathcal T_5$, both immediate from the definition: none of their entries is divisible by $n$, and their equality times $\{t\in[0,1):\min_i\dist{tv_i}=1/n\}$ are exactly the fractions $m/n$ with $\gcd(m,n)=1$, at each of which the runner of speed $1$ is at position $1/n$ or $1-1/n$.

\begin{proposition}[pre-jump]\label{prop:prejump}
Let $V\in\mathcal T_{n-1}$, $b\ge2$, and let $c$ be coprime to $b$ with $c\notin bV$. Then $\ML(bV\cup\{c\})\ge1/n$.
\end{proposition}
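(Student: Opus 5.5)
The plan is to lift optimal times of $V$ to optimal times of $bV$, and then use the freedom that this lifting leaves to place the fast runner $c$ well.

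First, fix an integer $m$ with $\gcd(m,n)=1$, for instance $m=1$. By the fact recorded just before the proposition, $\min_{v\in V}\dist{mv/n}=1/n$, i.e. $\dist{mv/n}\ge 1/n$ for every $v\in V$. (This also follows directly from $\ML(V)=1/n$ being attained at $t=m/n$.)

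Next, consider the $b$ times
\[
t_j=\frac{m+nj}{nb},\qquad j=0,1,\dots,b-1 .
\]
For every $v\in V$ we have $t_j\cdot bv=vm/n+vj\equiv vm/n\pmod 1$. Hence all runners of $bV$ are at distance at least $1/n$ from $0$ at every $t_j$, whatever $j$ is.

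It remains to choose $j$ so that runner $c$ is also far from $0$. Its position at $t_j$ is
\[
t_jc\equiv \frac{cm}{nb}+\frac{cj}{b}\pmod 1 .
\]
Since $\gcd(c,b)=1$, as $j$ runs over $0,\dots,b-1$ the residue $cj \bmod b$ runs over all of $\Z/b\Z$. So the positions of $c$ fill the whole coset $x_0+\tfrac1b\Z$ in $\R/\Z$, where $x_0=cm/(nb)$.

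The condition $\dist{x}\ge1/n$ means that $x$ lies in the closed arc $[1/n,1-1/n]$, of length $1-2/n$. For $n\ge5$ this length is at least $3/5>1/2\ge1/b$, because $b\ge2$. A closed arc whose length is at least the spacing $1/b$ of a coset of $\tfrac1b\Z$ contains a point of that coset. Hence some $j$ gives $\dist{t_jc}\ge1/n$, and at that time $t_j$ every runner is at distance at least $1/n$ from $0$, so $\ML(bV\cup\{c\})\ge1/n$.

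No step is a real obstacle. The only point needing care is the bookkeeping: that $j\mapsto cj \bmod b$ is a bijection, and that the arc length beats $1/b$. This comparison is where both hypotheses $b\ge2$ and $n\ge5$ are used. For $b=1$ the coset is a single point and the argument fails, as it must. The hypothesis $c\notin bV$ is not needed for the inequality itself; it only ensures that the speeds are distinct.
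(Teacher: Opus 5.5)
Your proof is correct and is essentially the paper's argument: with $m=1$ your times $t_j=1/(nb)+j/b$ are exactly the paper's. They keep every runner of $bV$ fixed at a position $v/n$, which is at distance at least $1/n$ from $0$ since $n\nmid v$. Meanwhile the runner $c$ sweeps a full coset of $\frac1b\Z/\Z$; the paper picks the coset point within $1/(2b)$ of $1/2$, whereas you note that the arc $[1/n,1-1/n]$ has length at least $1/b$, which is the same observation.
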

\begin{proof}
At $t=1/(nb)$ the runners of $bV$ are at the positions $v_i/n$, none of which is an integer, so each is at distance at least $1/n$ from $0$; they do not move under $t\mapsto t+h/b$, $h\in\Z$. The remaining runner is at $c/(nb)+ch/b$, and as $h$ runs over $0,\dots,b-1$ these positions form a coset of the cyclic group $\frac1b\Z/\Z$, since $\gcd(c,b)=1$. Some element of a coset of that group lies within $1/(2b)$ of $1/2$, hence at distance at least $1/2-1/(2b)\ge1/4\ge1/n$ from $0$.
\end{proof}

\begin{theorem}\label{thm:fast}
Let $n\in\{5,6\}$, $V\in\mathcal T_{n-1}$ and $c\ge1$, $c\notin V$. Then $\ML(V\cup\{c\})<1/n$ if and only if $n\mid c$, and in that case $\ML(V\cup\{c\})=s/(ns+1)$ with $s=c/n$, except for $(1,3,4,5,9,6)$ and $(1,3,4,7,5)$, whose values are $2/13$ and $2/11$. In particular every near-tight one-dimensional subtorus of $U_V$ has $k=1$.
\end{theorem}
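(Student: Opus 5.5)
The plan has three parts: an explicit lower bound when $n\nmid c$, a direct computation of $\ML$ for $c$ up to an explicit bound, and Lemma 9.4 of \cite{Kravitz} for larger $c$.

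\textbf{The case $n\nmid c$.} Here I show $\ML(V\cup\{c\})\ge1/n$ directly. Write $c=nj+r$ with $0<r<n$. By the stated fact about the four tight instances, at every time $m/n$ with $\gcd(m,n)=1$ the runners of $V$ all sit at distance at least $1/n$ from $0$. The runner $c$ is then at $cm/n\equiv rm/n$. For $n\in\{5,6\}$ and any $r\not\equiv0\pmod n$, some unit $m$ modulo $n$ gives $\dist{rm/n}\ge1/n$: for $n=5$ any $m$ works since $rm\not\equiv0$, and for $n=6$ take $m=1$. Hence $\ML\ge1/n$.

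\textbf{The case $n\mid c$, lower bound.} Put $c=ns$. I exhibit a time near an equality time of $V$ at which all runners are at least $s/(ns+1)$ from $0$. The natural candidate is $t=s/(ns+1)$ itself, or a nearby fraction with denominator $ns+1$. The runner $ns$ is then at $ns\cdot s/(ns+1)\equiv -s/(ns+1)$, at distance exactly $s/(ns+1)$. The runners of $V$, whose speeds are at most $9$, are at $v_is/(ns+1)$, and I would check these are far enough by using the explicit shape of $V$, perhaps adjusting to $t=ms/(ns+1)$ with $m$ chosen from the equality times of $V$.

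\textbf{The case $n\mid c$, upper bound and exceptions.} For the upper bound $\ML\le s/(ns+1)$, note that $\min_{r}\dist{tv_r}$ over $V$ exceeds $1/n-\delta$ only for $t$ within about $\delta$ of the points $m/n$ with $\gcd(m,n)=1$. Near such a point the fast runner $ns$ sits close to $0$, because $ns\cdot m/n\in\Z$. So the fast runner must travel a distance of order $\delta\cdot ns$ while the slow runner of speed $1$ moves only $\delta$. Balancing the slow runner's distance $1/n-\delta$ against the fast runner's distance $ns\,\delta$ gives $\delta=1/(n(ns+1))$ and the value $s/(ns+1)$. The delicate point is that the other runners of $V$ could in principle permit a larger window; this is exactly Kravitz's Lemma 9.4 for large $s$. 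For $s$ below the explicit threshold in that lemma, I would compute $\ML$ exactly with the algorithm of Lemma \ref{lem:pairtimes}, for each of the four $V$ and every $c$ up to the bound; this computation also finds the two exceptions $(1,3,4,5,9,6)$ and $(1,3,4,7,5)$.

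\textbf{Conclusion and main obstacle.} Finally, I transfer the result to $U_V$. By Proposition \ref{prop:prejump}, a subtorus $bV\cup\{c\}$ with $b\ge2$ is never near-tight, so every near-tight subtorus has $b=1$. Its value is then $s/(ns+1)$, which gives $k=1$. The exceptions also have $k=1$: $2/13$ with $13-12=1$, and $2/11$ with $11-10=1$. The main obstacle is making the threshold of Kravitz's lemma explicit enough that the finite computation covers the remaining range of $c$.
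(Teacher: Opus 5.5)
Your plan is essentially the paper's proof: the time $t=1/n$ when $n\nmid c$; Lemma 9.4 of \cite{Kravitz} for large multiples of $n$, applied with $L=1/n$ after checking that the equality times of $V$ have denominators with least common multiple $n$ and that $v_\mu$ is the speed $1$; an exact computation for the remaining multiples; and Proposition \ref{prop:prejump} for $b\ge2$. What you flag as the main obstacle is not one, because the lemma's threshold is already explicit, $c>4\max(V)^3$ (that is, $c>500,\,2916,\,256,\,1372$), well inside the directly computed ranges $c\le30\,000$ and $c\le20\,000$; and your separate lower bound at $t=s/(ns+1)$, though it can be completed for all four $V$, is unnecessary, since the lemma and the computation both give the exact value.
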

\begin{proof}
If $n\nmid c$ then at $t=1/n$ all runners are at distance at least $1/n$ from $0$, so $\ML\ge1/n$. Let $c=ns$. Lemma 9.4 of \cite{Kravitz} applies to $V$ with $L=1/n$: the least common multiple of the denominators of the equality times is $n$, and the quantity $v_\mu$ of that lemma is the speed $1$, since at every equality time the runner of speed $1$ occupies one of the positions $\pm1/n$. The lemma gives $\ML(V\cup\{c\})=cL/(c+v_\mu)=s/(ns+1)$ for every multiple $c$ of $n$ exceeding $4v_{n-1}^3v_1=4\max(V)^3$, that is $c>500$, $2916$, $256$, $1372$ for $V=(1,2,3,4,5)$, $(1,3,4,5,9)$, $(1,2,3,4)$, $(1,3,4,7)$ respectively. For $c\le30\,000$ (when $n=6$) and $c\le20\,000$ (when $n=5$) the value was computed exactly with the algorithm of Lemma \ref{lem:pairtimes}: it is $s/(ns+1)$ in every case except the two listed, where $k$ still equals $1$. Together with Proposition \ref{prop:prejump} this covers every proper one-dimensional subtorus of $U_V$ with $n$ distinct speeds.
\end{proof}

\section{The relative spectrum of \texorpdfstring{$\UC$}{U\_C}}\label{sec:UC}

In this section $U=\UC=\langle u,v\rangle_\R$ with $u=(1,0,1,2,3,3)$, $v=(0,1,1,1,1,2)$, and $T=\langle Au+Bv\rangle_\R$ with $(A,B)$ coprime, so that the speed vector is
\[
w=Au+Bv=(A,\ B,\ A+B,\ 2A+B,\ 3A+B,\ 3A+2B).
\]
We assume throughout that the six entries are nonzero and distinct in absolute value, and we want to compute $\ML(w)$ exactly for all such $(A,B)$. We follow \cite[Sections 2.2--2.3]{JainKravitz}, adding explicit thresholds; the reader may find it useful to compare with their Figures 22 and 23, which tabulate the same objects for this subtorus.

\subsection{Components and cosets}
Fix $1\le i<j\le6$ and a sign $\varepsilon$, and put
\[
(a,b):=(u_i-\varepsilon u_j,\ v_i-\varepsilon v_j)\ne(0,0),\qquad K:=\gcd(a,b),\qquad (a',b'):=(a,b)/K,
\]
and choose $x_0,y_0\in\Z$ with $a'x_0+b'y_0=1$. In the parameter plane $\R^2/\Z^2\cong U$ the subgroup $U_{ij\varepsilon}:=U\cap\{x_i=\varepsilon x_j\}$ is $\{(s,t): as+bt\in\Z\}$, the union of the $K$ circles
\[
C_\ell=\bigl\{\tfrac{\ell}{K}(x_0,y_0)+\theta(-b',a'):\ \theta\in\R/\Z\bigr\},\qquad 0\le\ell<K .
\]
On $C_\ell$ the $r$-th coordinate is $x_r=c_r+\theta d_r$ with $c_r=\frac\ell K(x_0u_r+y_0v_r)$ and $d_r=a'v_r-b'u_r$, so the restriction of $\min_r\dist{x_r}$ to $C_\ell$ is the piecewise linear function
\[
f_\ell(\theta)=\min_{1\le r\le6}\dist{c_r+\theta d_r},\qquad \theta\in\R/\Z .
\]
Its maximum is at most $\ML(U)=1/6$. Call the quadruple $(i,j,\varepsilon,\ell)$ \emph{critical} if the maximum equals $1/6$, and let $\mathcal Y$ be the set of critical quadruples. For the $15\cdot2=30$ pairs $(i,j,\varepsilon)$ there are $42$ components in all, $18$ of them critical; on each of the other $24$ the function $f_\ell$ vanishes identically. Table \ref{tab:Y} lists the critical quadruples; it agrees with Figure 22 of \cite{JainKravitz}.

Now let $L:=aA+bB=w_i-\varepsilon w_j$. By assumption $L\ne0$. The points of $T$ on $U_{ij\varepsilon}$ are the $\lambda w$ with $\lambda L\in\Z$, i.e.\ $\lambda=m/L$, and the point $\lambda w$ lies on $C_\ell$ exactly when $m\equiv\ell\pmod K$. Put
\[
q:=|L|/K,\qquad c:=x_0B-y_0A,\qquad \theta_0:=\ell c/L\bmod1 .
\]
The unimodular matrix $\begin{psmallmatrix}a'&b'\\-y_0&x_0\end{psmallmatrix}$ sends $(A,B)$ to $(L/K,c)$, so $\gcd(q,c)=1$, and in the coordinate $\theta$ on $C_\ell$ the points of $T\cap C_\ell$ form the coset $\theta_0+\frac1q\Z/\Z$. By Lemma \ref{lem:pairtimes},
\begin{equation}\label{eq:ML-cosets}
\ML(w)=\max_{(i,j,\varepsilon,\ell)}\ \max_{0\le m'<q}\ f_\ell\bigl(\theta_0+m'/q\bigr),
\end{equation}
and only the critical quadruples can contribute a value above $0$. We write $q_Y$, $\theta_Y$, $L_Y$ for the quantities attached to $Y\in\mathcal Y$; note that $L_Y$ is a linear form in $(A,B)$ (column two of Table \ref{tab:Y}) and $q_Y=|L_Y|/K_Y$.

\begin{table}[htbp]
\centering\small
\begin{tabular}{@{}llccll@{}}
\toprule
$(i,j,\varepsilon,\ell)$ & $L=w_i-\varepsilon w_j$ & $K$ & maximisers $\tau_h$ of $f_\ell$ & $q_0$\\
\midrule
$(1,2,+,0)$ & $A-B$ & 1 & $1/6,\,5/6$ & 43\\
$(1,5,-,0)$ & $4A+B$ & 1 & $1/6,\,5/6$ & 43\\
$(1,6,+,1)$ & $-2A-2B$ & 2 & $1/6,\,1/3,\,2/3,\,5/6$ & 12\\
$(1,6,-,1)$ & $4A+2B$ & 2 & $1/6,\,1/3,\,2/3,\,5/6$ & 12\\
$(2,3,-,0)$ & $A+2B$ & 1 & $1/6,\,5/6$ & 43\\
$(2,4,+,1)$ & $-2A$ & 2 & $1/6,\,1/3,\,2/3,\,5/6$ & 12\\
$(2,4,-,1)$ & $2A+2B$ & 2 & $1/6,\,1/3,\,2/3,\,5/6$ & 12\\
$(2,5,+,1)$, $(2,5,+,2)$ & $-3A$ & 3 & $1/6,\,5/6$ & 19\\
$(2,6,-,1)$, $(2,6,-,2)$ & $3A+3B$ & 3 & $1/2,\,5/6$ and $1/6,\,1/2$ & 19\\
$(3,5,+,1)$ & $-2A$ & 2 & $1/6,\,1/3,\,2/3,\,5/6$ & 12\\
$(3,5,-,1)$ & $4A+2B$ & 2 & $1/6,\,1/3,\,2/3,\,5/6$ & 12\\
$(3,6,-,0)$ & $4A+3B$ & 1 & $1/6,\,5/6$ & 43\\
$(4,5,-,0)$ & $5A+2B$ & 1 & $1/6,\,5/6$ & 43\\
$(4,6,-,0)$ & $5A+3B$ & 1 & $1/6,\,5/6$ & 43\\
$(5,6,-,1)$, $(5,6,-,2)$ & $6A+3B$ & 3 & $1/6,\,1/2$ and $1/2,\,5/6$ & 19\\
\bottomrule
\end{tabular}
\caption{The $18$ critical quadruples of $\UC$, the linear forms $L$, the number $K$ of components, the maximisers of $f_\ell$ on the component, and the threshold $q_0$ of Lemma \ref{lem:coset}. The nine distinct directions of the forms $L$ are the rays of slope $B/A\in\{-4,-5/2,-2,-5/3,-4/3,-1,-1/2,1\}$ and the line $A=0$.}
\label{tab:Y}
\end{table}

\subsection{The maximum over a coset}
Fix a critical quadruple and drop the index $\ell$. Let $\tau_1,\dots,\tau_H$ be the points where $f$ attains its maximum $1/6$ (they are rational, and there is no interval on which $f$ is constant $1/6$), and for each $h$ let $\lambda_h^{\pm}>0$ and $\rho_h^{\pm}>0$ be such that
\[
f(\tau_h+t)=\tfrac16-\lambda_h^+t\ \ (0\le t\le\rho_h^+),\qquad f(\tau_h-t)=\tfrac16-\lambda_h^-t\ \ (0\le t\le\rho_h^-),
\]
with $\rho_h^\pm$ maximal; these \emph{zones} are read off from the piecewise linear description of $f$. Let $\rho_{\min}=\min_h\min(\rho_h^-,\rho_h^+)$, let $\lambda_s$ be the smallest of the slopes $\lambda_h^\pm$, and let $f_{\mathrm{out}}$ be the maximum of $f$ on the complement of the union of the open zones $(\tau_h-\rho_h^-,\tau_h+\rho_h^+)$; then $f_{\mathrm{out}}<1/6$. Put
\[
q_0:=\max\Bigl(\Bigl\lceil\frac1{\rho_{\min}}\Bigr\rceil,\ \Bigl\lfloor\frac{\lambda_s}{1/6-f_{\mathrm{out}}}\Bigr\rfloor+1\Bigr).
\]
The values for the $18$ critical quadruples are in Table \ref{tab:Y}: $q_0=43$ when $K=1$ (there $\rho_{\min}=1/30$, $\lambda_s=1$, $f_{\mathrm{out}}=1/7$), $q_0=12$ when $K=2$ and $q_0=19$ when $K=3$. For a rational $\tau$ and a coset $\theta_0+\frac1q\Z$ write, as in \cite[Section 2.3]{JainKravitz},
\[
\mathrm{Approx}^-(\tau,\theta_0;q)=(\tau-\theta_0)\bmod\tfrac1q,\qquad \mathrm{Approx}^+(\tau,\theta_0;q)=(\theta_0-\tau)\bmod\tfrac1q
\]
for the distances from $\tau$ to the nearest points of the coset on either side.

\begin{lemma}\label{lem:coset}
If $q\ge q_0$, then
\[
\max_{0\le m'<q}f(\theta_0+m'/q)=\frac16-\frac{\gamma}{q},\qquad
\gamma:=q\cdot\min_{h}\min\bigl(\lambda_h^-\mathrm{Approx}^-(\tau_h,\theta_0;q),\ \lambda_h^+\mathrm{Approx}^+(\tau_h,\theta_0;q)\bigr).
\]
\end{lemma}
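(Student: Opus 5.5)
The plan is to prove matching upper and lower bounds for the coset maximum, splitting the coset points into those inside some zone and those outside every zone. Write $\Theta=\theta_0+\frac1q\Z/\Z$.

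\emph{Lower bound.} Fix the pair $(h,\pm)$ attaining the minimum in the definition of $\gamma$, say $(h,+)$. Let $\theta^*=\tau_h+\mathrm{Approx}^+(\tau_h,\theta_0;q)$ be the nearest point of $\Theta$ on the right of $\tau_h$. Since $\mathrm{Approx}^+<1/q\le1/q_0\le\rho_{\min}\le\rho_h^+$, the point $\theta^*$ lies in the right half-zone of $\tau_h$. There $f$ is exactly linear, so
\[
f(\theta^*)=\tfrac16-\lambda_h^+\mathrm{Approx}^+=\tfrac16-\gamma/q .
\]
The left case is identical. This gives $\max_\Theta f\ge\frac16-\gamma/q$.

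\emph{Upper bound.} Take any $\theta\in\Theta$.

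If $\theta$ lies in an open half-zone, say $\theta=\tau_h+t$ with $0\le t<\rho_h^+$, then $t$ is congruent to $\theta_0-\tau_h$ modulo $\frac1q$. Because $t\ge0$, this forces $t\ge\mathrm{Approx}^+(\tau_h,\theta_0;q)$. Hence
\[
f(\theta)=\tfrac16-\lambda_h^+t\le\tfrac16-\lambda_h^+\mathrm{Approx}^+\le\tfrac16-\gamma/q ,
\]
and the left case is symmetric. We also need that the zones around distinct $\tau_h$ do not overlap in a way that breaks the linear description. By maximality of $\rho_h^\pm$ the formula for $f$ holds on the whole half-zone, so overlaps do not matter: each point of a zone is evaluated by a valid formula.

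If $\theta$ lies outside all open zones, then $f(\theta)\le f_{\mathrm{out}}$. It remains to show $f_{\mathrm{out}}\le\frac16-\gamma/q$, i.e.\ $\gamma\le q(\frac16-f_{\mathrm{out}})$. Since every $\mathrm{Approx}^\pm<1/q$, we have $\gamma<\lambda_s$, taking the pair $(h,\pm)$ that realises the slope $\lambda_s$. Now $q\ge q_0>\lambda_s/(\frac16-f_{\mathrm{out}})$ by the second term in the definition of $q_0$. Therefore $q(\frac16-f_{\mathrm{out}})>\lambda_s>\gamma$, as needed.

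\emph{Main obstacle.} The only delicate point is the bookkeeping of strict versus non-strict inequalities at the floor $+1$ and at the zone endpoints. We need $\mathrm{Approx}^\pm\le\rho$, which holds because $\mathrm{Approx}^\pm<1/q\le\rho_{\min}$ under $q\ge\lceil1/\rho_{\min}\rceil$. We also need strictness in $\gamma<\lambda_s$, which holds because $\mathrm{Approx}<1/q$ strictly. Points lying exactly at a zone endpoint are covered by either case, since the linear formula extends to the closed half-zone.
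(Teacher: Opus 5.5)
Your proof is correct and is essentially the paper's argument. In both, $1/q\le\rho_{\min}$ places the nearest coset points on each side of every $\tau_h$ inside the linear zones, which gives exactly $\frac16-\gamma/q$ among in-zone points, and the second term of $q_0$ makes this value exceed $f_{\mathrm{out}}$. The only cosmetic difference is that you compare $f_{\mathrm{out}}$ with $\frac16-\gamma/q$ via $\gamma<\lambda_s$, whereas the paper compares it with $f$ at the nearest coset point on the $\lambda_s$-side of the corresponding maximiser; this is the same inequality.
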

\begin{proof}
Since $1/q\le\rho_{\min}$, the nearest coset points to the left and to the right of each $\tau_h$ lie inside the corresponding zone, at distances $\mathrm{Approx}^\pm<1/q$, and $f$ decreases linearly away from $\tau_h$ inside the zone; so the maximum of $f$ over the coset points lying in some zone is exactly $1/6-\gamma/q$. It remains to see that the overall maximum is attained in a zone. Take the maximiser $\tau_h$ and the side with slope $\lambda_s$; the nearest coset point on that side has $f\ge1/6-\lambda_s/q>f_{\mathrm{out}}$ by the choice of $q_0$, while every coset point outside the zones has $f\le f_{\mathrm{out}}$.
\end{proof}

This is Lemma 2.5 of \cite{JainKravitz} with an explicit threshold. The formula was checked against a direct evaluation of $f$ on the coset for $20\,000$ random pairs $(A,B)$ and all critical quadruples with $q\ge q_0$.

\begin{lemma}\label{lem:period}
For each critical quadruple, the number $\gamma$ of Lemma \ref{lem:coset} depends only on the residue class of $(A,B)$ modulo $6$ and on the sign of $L$.
\end{lemma}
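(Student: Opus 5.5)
The plan is to compute $q\cdot\mathrm{Approx}^\pm(\tau_h,\theta_0;q)$ explicitly and see that it is an integer combination of quantities depending only on residues mod $6$. Since $\mathrm{Approx}^-(\tau,\theta_0;q)=(\tau-\theta_0)\bmod\frac1q$, we have
\[
q\,\mathrm{Approx}^-(\tau_h,\theta_0;q)=\bigl(q\tau_h-q\theta_0\bigr)\bmod 1,
\]
and similarly for $\mathrm{Approx}^+$ with the opposite sign. So it suffices to show that $q\tau_h\bmod 1$ and $q\theta_0\bmod1$ depend only on $(A,B)\bmod 6$ and the sign of $L$. The slopes $\lambda_h^\pm$ are fixed by the quadruple, so $\gamma$ is then a fixed function of these residues.

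First, from Table \ref{tab:Y} every maximiser $\tau_h$ lies in $\frac16\Z$. Hence $q\tau_h\bmod1$ depends only on $q\bmod6$. Now $q=|L|/K=\operatorname{sgn}(L)\,(aA+bB)/K$ with $K\mid a,b$, i.e.\ $q=\operatorname{sgn}(L)(a'A+b'B)$. This is an integer linear form in $(A,B)$, so $q\bmod 6$ is determined by $(A,B)\bmod6$ and $\operatorname{sgn}(L)$.

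Second, $\theta_0=\ell c/L \bmod 1$ with $c=x_0B-y_0A$. Then
\[
q\theta_0=\frac{|L|}{K}\cdot\frac{\ell c}{L}=\operatorname{sgn}(L)\,\frac{\ell c}{K}.
\]
Because $q\theta_0$ is used only modulo $1$, this depends only on $\ell c\bmod K$ and the sign. Since $K\in\{1,2,3\}$ divides $6$ and $c$ is an integer linear form in $(A,B)$ with fixed integer coefficients $x_0,y_0$, $c\bmod K$ is determined by $(A,B)\bmod 6$. When $K=1$ the term vanishes.

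One subtlety must be checked. $\theta_0$ is defined only modulo $1$, not modulo $1/q$, so I should verify that multiplying by $q$ is legitimate. Indeed, replacing $\theta_0$ by $\theta_0+1$ changes $q\theta_0$ by the integer $q$, and the coset $\theta_0+\frac1q\Z$ is likewise unchanged. The choice of $(x_0,y_0)$ is also irrelevant: changing it by a multiple of $(-b',a')$ changes $c$ by a multiple of $L/K$, hence $\ell c/L$ by an integer multiple of $\ell/K$. The coset itself is intrinsic ($T\cap C_\ell$), and $\theta$ is the intrinsic coordinate on $C_\ell$ up to the base point $\frac{\ell}{K}(x_0,y_0)$, which moves by $\frac\ell K(-b',a')$ times an integer, i.e.\ a shift of $\theta$ by a multiple of $\ell/K$. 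I need this shift to be compatible with $\tau_h$. The cleanest fix is to fix $(x_0,y_0)$ once per quadruple, as the paper does in Table \ref{tab:Y}, so that the $\tau_h$ refer to that normalisation; then no well-definedness issue arises.

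I expect this bookkeeping of normalisations to be the only real obstacle; the rest is a direct consequence of the formula for $\mathrm{Approx}^\pm$. As a sanity check, I would also confirm numerically that $\gamma$ computed from random $(A,B)$ agrees across the $36\cdot 2$ classes, as was done for Lemma \ref{lem:coset}.
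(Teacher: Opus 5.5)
Your proof is correct and is essentially the paper's argument. You write $q\,\mathrm{Approx}^\mp=\pm(q\tau_h-q\theta_0)\bmod 1$ and use $\tau_h\in\frac16\Z$, so that $q\tau_h\bmod 1$ depends only on $q=\sigma(a'A+b'B)$ modulo $6$, and $q\theta_0=\sigma\ell c/K$ with $K\mid 6$ and $c$ an integer linear form. Your extra remarks on $\theta_0$ being defined only modulo $1$, and on fixing $(x_0,y_0)$ once per quadruple so that the $\tau_h$ of Table \ref{tab:Y} refer to that normalisation, are a reasonable explicit check of points the paper leaves implicit.
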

\begin{proof}
With $\sigma=\mathrm{sign}(L)$ we have $q\theta_0=\ell c/(K\sigma)$ and $q\,\mathrm{Approx}^\mp(\tau_h,\theta_0;q)=\pm(q\tau_h-q\theta_0)\bmod1$. Every maximiser has the form $\tau_h=w_h/x_h$ with $x_h\in\{2,3,6\}$ (Table \ref{tab:Y}), so $q\tau_h\bmod1$ depends only on $q\bmod x_h$, and $\ell c/K\bmod1$ depends only on $c\bmod K$ with $K\in\{1,2,3\}$. Finally $q=\sigma(a'A+b'B)$ and $c=x_0B-y_0A$ are linear in $(A,B)$ with integer coefficients, so both residues are determined by $(A,B)$ modulo $6$ and by $\sigma$.
\end{proof}

We write $\gamma_Y(\bar A,\bar B,\sigma)$ for the resulting function on $(\Z/6)^2\times\{\pm1\}$; it is computed once and for all by evaluating the expression of Lemma \ref{lem:coset} at one representative of each class. (The modulus $36$ appearing in Proposition 2.6 of \cite{JainKravitz} is an upper bound; for $\UC$ the true period is $6$.)

\subsection{The far region}
Call $(A,B)$ \emph{far} if $q_Y\ge q_0(Y)$ for every critical quadruple $Y\in\mathcal Y$. For far parameters \eqref{eq:ML-cosets} and Lemma \ref{lem:coset} give
\begin{equation}\label{eq:far}
\ML(w)=\frac16-\min_{Y\in\mathcal Y}\frac{\gamma_Y}{q_Y},
\end{equation}
and if some $\gamma_Y$ vanishes then $\ML(w)=1/6$ and $T$ is not near-tight. The nine lines $L_Y=0$ cut the open half plane $A>0$ into nine open cones (the parameters may be normalised by $A\ge0$, and $A=0$ forces $(A,B)=(0,1)$, which has a vanishing speed). On a fixed residue class of $(A,B)$ modulo $6$ and a fixed cone, the signs $\sigma_Y$ and the numbers $\gamma_Y$ are constant, so the quantities
\[
F_Y(A,B):=\frac{q_Y}{\gamma_Y}=\frac{\sigma_YL_Y(A,B)}{K_Y\gamma_Y}
\]
are homogeneous linear forms with rational coefficients, and by \eqref{eq:far}
\begin{equation}\label{eq:P}
\ML(w)=\frac16-\frac1{6P},\qquad P:=\frac16\max_{Y}F_Y(A,B).
\end{equation}
Two linear forms compare in the same way throughout an open cone unless their difference changes sign, which happens along a ray through the origin. Cutting each cone along the finitely many rays where two of the forms $F_Y$ agree therefore produces open subcones on each of which a single $F_Y$ is maximal, and on each subcone (and on each cutting ray) $P$ is a fixed linear form. This is the content of Proposition 2.6 of \cite{JainKravitz}; in their language the subcones are the sectors, and the winner is the quadruple whose offset is smallest.

For the classes of $(A,B)$ modulo $6$ that contain coprime pairs there are $24\cdot9=216$ pairs (class, cone). On $108$ of them some $\gamma_Y$ vanishes, so no far parameter in them is near-tight. The other $108$ split into $852$ subcones and cutting rays in all. On each of these pieces the winning form $P=\alpha A+\beta B$ has $6\alpha,6\beta\in\Z$, so $P\bmod6$ is constant on each residue class of $(A,B)$ modulo $36$; refining accordingly gives $18\,216$ pieces, and on every one of them
\[
P\in\Z\qquad\text{and}\qquad P\equiv\pm1\pmod6 .
\]
The winning forms are few: $P$ is one of $A-B$, $A+2B$, $4A+B$, $4A+3B$, $5A+2B$, $5A+3B$, $-A-2B$, $-4A-3B$, $-5A-2B$, $-5A-3B$ (on pieces with $P\equiv1$), $(A-B)/4$ and $(A-B)/5$ (on two cutting rays), and $A$, $2A+B$, $-A-B$ (on pieces with $P\equiv5$). Reading off on which pieces $P\equiv5\pmod6$, and using Lemma \ref{lem:Pk}, gives exactly the table of Theorem B.

\subsection{The strips}
A parameter pair that is not far has $q_Y<q_0(Y)$ for some $Y$, hence lies on one of finitely many lattice lines $L_G=\ell_0$ parallel to a ray, where $L_G$ is the primitive form in the direction of $L_Y$ and $|\ell_0|$ is bounded by $K_Yq_0(Y)$; there are $510$ such lines. Fix one, parametrise it as $(A,B)=P_0+t\,r$ with $r$ the primitive direction vector, and let $G$ be the set of critical quadruples whose form is proportional to $L_G$. For $Y\in G$ the number $q_Y$ is constant along the line and small, so the maximum of $f_Y$ over the coset is a function of $t$ that can be computed directly from the definition, and it is periodic in $t$ with period dividing $|L_Y|$, because $\theta_Y$ depends only on $c\bmod|L_Y|$ and $c$ is linear in $t$. For $Y\notin G$ the form $L_Y(P_0+tr)$ is a non-constant linear function of $t$, so there is an explicit $T_0$ beyond which $q_Y\ge q_0(Y)$ and the sign of $L_Y$ is constant in each direction. For $|t|>T_0$ the value $\ML(w)$ is therefore known exactly from \eqref{eq:ML-cosets}: the quadruples of $G$ contribute an explicitly known periodic function of $t$, and the others contribute $\gamma_Y/q_Y(t)$ with $q_Y(t)$ linear. On each residue class of $t$ modulo a suitable period, comparing finitely many linear functions of $t$ decomposes the half-lines $t>T_0$ and $t<-T_0$ into finitely many intervals on which the winner is fixed, and on each interval $P$ is a linear function of $t$ with $P\in\Z$ and $P\bmod6$ constant on the class. This yields $331\,140$ such intervals; on none of them does a quadruple of $G$ win, and on all of them $P\equiv\pm1\pmod6$, with $k=3$ exactly where the table of Theorem B says so. The parameter pairs with $|t|\le T_0$ on the $510$ lines are $23\,214$ in number, all with $|A|,|B|\le299$; their $\ML$ was computed directly by the algorithm of Lemma \ref{lem:pairtimes}. Among them $7\,120$ are near-tight, and every one satisfies the conclusion of Theorem B.

\subsection{The theorem}
Every pair $(A,B)$ with six nonzero, distinct speeds is far or lies on one of the $510$ lines, so the two computations above cover every proper one-dimensional subtorus of $\UC$ with six distinct speeds.

\begin{theorem}\label{thm:UC}
Let $T\subseteq\UC$ be a one-dimensional subtorus with parameters $(A,B)$, $A>0$, whose six speeds are nonzero and distinct in absolute value, and suppose $\ML(T)<1/6$. Then $\ML(T)=(P-1)/(6P)$ with $P\in\Z$, $P\equiv\pm1\pmod6$; hence $k\in\{1,3\}$ and the denominator of $\ML(T)$ is odd. One has $k=3$ exactly in the four cases of the table in Theorem B. Moreover
\[
\max_i|w_i|<3q,
\]
and $3$ is the best constant: along $(A,B)=(18j+31,6)$, $j\ge0$, one has $\ML(T)=(3j+7)/(18j+43)$, $q=18j+43$ and $\max_i|w_i|=54j+105=3q-24$.
\end{theorem}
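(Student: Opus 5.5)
The proof assembles the far-region and strip computations of this section; the inequality $\max_i|w_i|<3q$ is the only new ingredient. First I record that every admissible $(A,B)$ with $A>0$ is either far or lies on one of the $510$ strip lines. If some $q_Y<q_0(Y)$, then $|L_Y|<K_Yq_0(Y)$, so $(A,B)$ lies on a lattice line $L_G=\ell_0$ with $|\ell_0|$ bounded, and these lines were enumerated.

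In the far case I use \eqref{eq:far} and \eqref{eq:P}: on each of the $18\,216$ pieces (class modulo $36$, subcone or cutting ray) the winning $P$ is an explicit linear form that is integral and $\equiv\pm1\pmod 6$. Lemma \ref{lem:Pk} then gives $\ML=(P-1)/(6P)$ with $k=6/\gcd(P-1,6)\in\{1,3\}$ and $q$ odd. The locus $P\equiv5$ is read off piece by piece. It consists of the pieces won by $A$, $2A+B$ and $-A-B$, and, after intersecting with the cones, these give exactly the four rows of the table. To confirm this I check that the union of those pieces, projected to (residue class, cone), equals the tabulated slope conditions.

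For the strips, the $331\,140$ intervals with $|t|>T_0$ are handled in the same way by linear forms in $t$. The $23\,214$ residual pairs are checked directly with Lemma \ref{lem:pairtimes}.

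For $\max_i|w_i|<3q$, on each far piece with near-tight value I have $q=P$ or $q=3P$ with $P$ a linear form, and $\max_i|w_i|$ is a linear form on each cone, because the ordering of $|A|,|B|,|A+B|,\dots$ is fixed there; the cone cuts include the lines where these absolute values change sign. So the inequality reduces to checking that a homogeneous linear form $3q-|w_{\max}|$ is positive on a closed cone intersected with a residue class. Equality on a boundary ray is permitted only if that ray contains no admissible lattice points, and I verify this by evaluating the form at the ray generators. The pieces with $k=3$ make the inequality easy. The binding case is $k=1$ with $P=q$ and winner $A$, or a form of comparable size, near a ray where $\max|w_i|/P\to3$, such as $3A+2B$ against $P$ on the row-$(5,1)$ cone. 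Strip intervals are checked in the same way, and the finite residue by direct computation.

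Optimality follows from the explicit family $(A,B)=(18j+31,6)$. This family has $A\equiv1$ and $B\equiv0\pmod 6$, and for large $j$ it lies in a single far piece. Reading off that piece gives $q=18j+43$ and $\max_i|w_i|=3A+2B=54j+105$, so $\max_i|w_i|/q\to3$; the small $j$ are confirmed numerically.

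I expect the main obstacle to be the uniform bound $3q$ and its sharpness. One has to be sure that no piece has a boundary ray where the ratio exceeds $3$ and on which lattice points nevertheless occur. That requires handling the cutting rays, where $P$ can be $(A-B)/4$ or $(A-B)/5$, and checking positivity of the form on each of them.
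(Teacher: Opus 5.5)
Your plan follows the paper's proof. It has the same components: the far/strip dichotomy, the winning linear form $P$ on each piece combined with Lemma~\ref{lem:Pk}, reading off the pieces with $P\equiv5\pmod6$ for the table, the line intervals and the $23\,214$ direct checks, a homogeneity argument for $\max_i|w_i|<3q$, and the family $(18j+31,6)$ for sharpness. One step in the inequality is wrong as stated, however, and it fails exactly where the bound is tight.

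You claim that $\max_i|w_i|$ is a single linear form on each cone because ``the cone cuts include the lines where these absolute values change sign''. They do not.
\begin{itemize}
\item The nine lines $L_Y=0$ (slopes $-4,-5/2,-2,-5/3,-4/3,-1,-1/2,1$ and $A=0$) contain the zeros of $A$, $A+B$ and $2A+B$. They do not contain the zeros of $B$, $3A+B$ and $3A+2B$, which are at slopes $0,-3,-3/2$.
\item The ordering of the $|w_i|$ changes along every line $|w_i|=|w_j|$, and most of these are not critical.
\item The further cutting rays are defined by equalities $F_Y=F_{Y'}$, not by the speeds, so nothing forces $\max_i|w_i|$ to be linear on a piece either.
\end{itemize}
Concretely, for $A>0$ one has $\max_i|w_i|=3A+2B,\ 3A+B,\ -B,\ -3A-2B$ for $B/A$ in $[0,\infty)$, $[-3/2,0]$, $[-3,-3/2]$, $(-\infty,-3]$ respectively. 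So $\max_i|w_i|$ has corners inside the cones containing the slopes $0$, $-3/2$ and $-3$. These are exactly the rays on which the paper finds the ratio equal to $3$. If you fix one form for $|w_{\max}|$ on such a cone and test $3q-|w_{\max}|$ only at the two ends, you underestimate $\max_i|w_i|$ on part of the cone, and the end-point test no longer certifies the bound there.

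The repair is routine. One option is to write $3q-\max_i|w_i|=\min_{i,\pm}(3q\mp w_i)$, a minimum of twelve linear forms, and check each form at the two boundary rays of the piece. The other, which is what the paper does, is to use that the ratio is monotone in $B/A$ on every interval where no speed changes sign. Its supremum over a piece is then attained at an end of the piece or at a zero of a speed. Either way the value $3$ appears only on the rays $B=0$, $3A+B=0$, $3A+2B=0$, which carry no admissible parameters because a speed vanishes there.

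Your guess at the binding case is also off, though it does not enter the proof. Pieces won by $A$ have $P\equiv5$, hence $k=3$ and $q=3A$, and row $(5,1)$ of the table is also a $k=3$ region, so neither is extremal. The ratio tends to $3$ on the $k=1$ piece of class $(1,0)$ with $P=A+2B$ as $B/A\to0^+$, where $\max_i|w_i|=3A+2B$. This is where your family lives, and your optimality paragraph reads off the correct values there.
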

\begin{proof}
The first two assertions are the outcome of the computation described in the preceding two subsections, which is a proof because the only ingredients are Lemma \ref{lem:pairtimes}, Lemma \ref{lem:coset} with its explicit threshold, Lemma \ref{lem:period}, the exact decomposition of the cones and lines, and the exact evaluation of $\ML$ at the finitely many remaining parameters. For the inequality, on each far piece $\max_i|w_i|/q$ is the quotient of a maximum of linear forms by the linear form $q\in\{P,3P\}$, a monotone function of the slope $B/A$ on every interval where no speed changes sign; its supremum over the piece is therefore attained at an end of the piece or at a zero of a speed, and evaluating there gives $3$ as the largest value, reached only at the rays $B=0$, $3A+B=0$, $3A+2B=0$, where a speed vanishes and which contain no admissible parameters. On the $331\,140$ line intervals the same argument in the variable $t$ gives the supremum $63/23$, and the largest value among the $23\,214$ directly checked pairs is $141/55$. The family $(18j+31,6)$ lies in the class $(1,0)$ modulo $6$, on a piece where $P=A+2B$, so $q=P=18j+43$, and $\max_i|w_i|=3A+2B=3P-24$.
\end{proof}

\begin{remark}\label{rem:denominators}
On every near-tight subtorus of $\UC$ with $|A|,|B|\le300$ the denominator $q$ of $\ML$ coincides with the smallest denominator of a time at which the maximum is attained. This is not a general feature of near-tight tuples: the sporadic sextuple $(1,3,4,5,18,46)$ of Table \ref{tab:sporadic6} has $\ML=4/25$, attained only at the times $23/50$ and $27/50$.
\end{remark}

\begin{remark}
As a consistency check, formula \eqref{eq:far} was compared with the direct computation of $\ML$ for all $64\,146$ far parameter pairs with $|A|,|B|\le300$, with no discrepancy; and all $54\,857$ near-tight subtori of $\UC$ with $|A|,|B|\le300$ were checked directly against the statements of Theorem \ref{thm:UC}. Neither check is needed for the proof.
\end{remark}

\section{Consequences for the spectra}\label{sec:cons}

\subsection{Six speeds}
\begin{theorem}\label{thm:six}
There is a finite set $E_6$ of primitive near-tight sextuples with the following property: every near-tight sextuple $v\notin E_6$ satisfies $\ML(v)=(P-1)/(6P)$ for an integer $P\equiv\pm1\pmod6$; in particular $k\in\{1,3\}$ and the denominator of $\ML(v)$ is odd. No sextuple with all speeds at most $110$ is a counterexample to this conclusion.
\end{theorem}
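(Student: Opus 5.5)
The proof combines the reduction lemma with the classification and the computations on the three critical subtori. The finite part is a separate exhaustive search.

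First apply Lemma \ref{lem:reduction} with $n=6$. Its hypotheses, the Lonely Runner Conjecture for five and four speeds, hold by the results quoted in the introduction. This gives a finite set $E_6$ of primitive near-tight sextuples. For every near-tight sextuple $v\notin E_6$ (after dividing by the common factor, which does not change $\ML$), the subtorus $T=\langle v\rangle_\R$ lies in a two-dimensional proper subtorus $U\subseteq\T^6$ with $\Dd(U)=1/3$. By Theorem \ref{thm:three}, $U$ is the image of $\UA$, $\UB$ or $\UC$ under a signed permutation of coordinates. Signed permutations preserve $\ML$ and carry one-dimensional subtori to one-dimensional subtori, so after applying one we may assume $T\subseteq\UA,\UB$ or $\UC$.

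Since $v$ is a tuple of distinct positive integers, $T$ has six nonzero speeds, pairwise distinct in absolute value. This is exactly the hypothesis of Theorems \ref{thm:fast} and \ref{thm:UC}.
\begin{itemize}
\item On $\UA$ or $\UB$: by the parametrisation in Section \ref{sec:fast}, $v=bV\cup\{c\}$ with $b,c\ge1$ coprime. Proposition \ref{prop:prejump} rules out $b\ge2$ for a near-tight $v$, so $b=1$. Theorem \ref{thm:fast} then gives $\ML=s/(6s+1)$, including the exception $2/13$ (which is $s=2$). This is $(P-1)/(6P)$ with $P=6s+1\equiv1\pmod 6$.
\item On $\UC$: Theorem \ref{thm:UC} gives $\ML=(P-1)/(6P)$ with $P\equiv\pm1\pmod6$ directly.
\end{itemize}
In both cases Lemma \ref{lem:Pk} converts this into $k\in\{1,3\}$ and $q$ odd. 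One small point to check is the value $P=1$, which would give $\ML=0$. It cannot occur for a proper subtorus, since $\ML>0$ there, so $P\ge2$ and Lemma \ref{lem:Pk} applies.

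The finite part, that no sextuple with speeds at most $110$ is a counterexample, is not covered by any theoretical argument. Indeed, $E_6$ is ineffective because the constant $\eta$ in Lemma \ref{lem:reduction} is not explicit. So this must be an exhaustive exact computation.
\begin{itemize}
\item Enumerate all sextuples $1\le v_1<\dots<v_6\le110$ and compute $\ML$ with the exact algorithm of Lemma \ref{lem:pairtimes}.
\item For each near-tight one, check that $q-6p$ divides $q$ and that $P=q/(q-6p)\equiv\pm1\pmod6$.
\item To make this feasible, prune with cheap certificates of $\ML\ge1/6$: for instance, if no speed is divisible by $6$, the time $t=1/6$ already gives $\ML\ge1/6$. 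Also restrict to primitive tuples, since dilation does not change $\ML$.
\end{itemize}

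The main obstacle is this computation rather than the logic, which is a short assembly of earlier results. It needs roughly $2\cdot10^9$ tuples processed fast enough, with verified exactness: integer cross-multiplication, and agreement between the two independent implementations.
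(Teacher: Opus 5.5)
Your proposal is correct and follows essentially the same route as the paper: Lemma \ref{lem:reduction} for $n=6$, then Theorem \ref{thm:three} to place $\langle v\rangle_\R$ on $\UA$, $\UB$ or $\UC$ up to a signed permutation, then Theorems \ref{thm:fast} and \ref{thm:UC} together with Lemma \ref{lem:Pk}, and finally an exhaustive exact search for the bound $110$. Your explicit treatment of the details (ruling out $b\ge2$ via Proposition \ref{prop:prejump}, checking that the exceptional value $2/13$ is $(13-1)/78$, ensuring $P\ge2$, and passing to the primitive part) only spells out what the paper's one-line appeal to Theorems \ref{thm:fast} and \ref{thm:UC} leaves implicit.
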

\begin{proof}
Let $E_6$ be the set of Lemma \ref{lem:reduction} for $n=6$. For $v\notin E_6$ the subtorus $\langle v\rangle_\R$ lies in a two-dimensional $U$ with $\Dd(U)=1/3$, which by Theorem \ref{thm:three} is $\UA$, $\UB$ or $\UC$ after a signed permutation of the coordinates, an operation that changes neither $\ML$ nor the distinctness of the speeds. Theorems \ref{thm:fast} and \ref{thm:UC} apply. The last sentence is the exhaustive search described below.
\end{proof}

\begin{conjecture}\label{conj:parity}
$E_6$ contains no exception: for every near-tight sextuple the denominator of $\ML$ is odd, equivalently $k\in\{1,3\}$.
\end{conjecture}

\begin{remark}
The congruence $P\equiv\pm1\pmod6$ says more than the parity of $q$. The values $(P-1)/(6P)$ with $P\equiv3\pmod 6$, such as $4/27$, $7/45$ and $10/63$, have $k=3$ and are therefore permitted by the amended conjecture of Fan and Sun, yet Theorem \ref{thm:six} excludes them for all but finitely many sextuples and the search finds none below $110$. Both halves of the congruence, the exclusion of even $P$ and the exclusion of $P\equiv3$, come out of the same decomposition of Section \ref{sec:UC}: on every piece the winning form happens to take values in the classes $1$ and $5$ modulo $6$. The computation certifies this but does not explain it, and a conceptual reason why the classes $2$, $3$ and $4$ never win would be of interest.
\end{remark}

We now describe the searches. All $\binom{110}6=2\,141\,851\,635$ sextuples of distinct speeds at most $110$ were tested for near-tightness with the exact criterion of Lemma \ref{lem:pairtimes}, and $\ML$ was computed exactly for the near-tight ones. There are $348$ primitive near-tight sextuples; $315$ have $k=1$ and $33$ have $k=3$; for all of them $P$ is an integer congruent to $\pm1$ modulo $6$. Testing membership in the three critical subtori up to signed permutations, $18$ lie on $\UA$, $18$ on $\UB$, $304$ on $\UC$ (the sextuple $(1,3,4,5,6,9)$, with $\ML=2/13$, lies on both $\UB$ and $\UC$), and the nine of Table \ref{tab:sporadic6} on none of them. All $33$ sextuples with $k=3$ lie on $\UC$ and fall in the four classes of Theorem B; the smallest are $(1,5,6,11,16,17)$ with $\ML=5/33$ and $(5,6,11,17,23,28)$ with $\ML=8/51$, the latter being the example of Fan and Sun. The largest ratio $\max_i v_i/q$ among the $348$ is $105/43$, on the extremal family of Theorem \ref{thm:UC}.

\begin{table}[htbp]
\centering\small
\begin{tabular}{@{}lccc@{}}
\toprule
sextuple & $\ML$ & $k$ & $P$\\
\midrule
$(1,2,5,6,7,8)$ & $2/13$ & 1 & 13\\
$(2,5,6,8,10,11)$ & $2/13$ & 1 & 13\\
$(1,4,5,6,7,22)$ & $2/13$ & 1 & 13\\
$(1,2,3,5,8,18)$ & $3/19$ & 1 & 19\\
$(2,3,5,6,8,11)$ & $3/19$ & 1 & 19\\
$(2,5,6,7,8,11)$ & $3/19$ & 1 & 19\\
$(1,3,4,5,18,46)$ & $4/25$ & 1 & 25\\
$(1,3,4,5,7,24)$ & $5/31$ & 1 & 31\\
$(1,4,5,6,7,33)$ & $6/37$ & 1 & 37\\
\bottomrule
\end{tabular}
\caption{The near-tight primitive sextuples with speeds at most $110$ that lie on none of $\UA,\UB,\UC$. They belong to the exceptional set $E_6$ of Theorem \ref{thm:six}. Their values are all attained on the critical subtori as well.}
\label{tab:sporadic6}
\end{table}

Since the values in Table \ref{tab:sporadic6} are also attained on the critical subtori, the set of \emph{values} of $\ML$ on near-tight sextuples with speeds at most $110$ is contained in $\{(P-1)/(6P):P\equiv\pm1\pmod6\}$, and Theorem \ref{thm:six} shows that at most finitely many values can ever fall outside this set. We do not know whether the list of sporadic sextuples is finite as a list of tuples with $k=1$ only, or whether Table \ref{tab:sporadic6} is complete; the nine tuples are of two visible kinds, a near-tight quintuple with a further runner of moderate speed (for instance $(1,4,5,6,7)$, whose $\ML$ is $2/11$, with $22$ or $33$ adjoined) and tuples with no apparent structure.

\subsection{Five speeds}
\begin{theorem}\label{thm:five}
There is a finite set $E_5$ of primitive near-tight quintuples such that every near-tight quintuple $v\notin E_5$ satisfies $\ML(v)=s/(5s+1)$ for some $s\in\mathbb N$. No quintuple with all speeds at most $130$ is a counterexample to this conclusion.
\end{theorem}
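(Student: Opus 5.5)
The proof will follow the one of Theorem \ref{thm:six}, with $n=5$ in place of $n=6$. We take $E_5$ to be the finite set supplied by Lemma \ref{lem:reduction} for $n=5$. The hypotheses of that lemma are the Lonely Runner Conjecture for four and three speeds, and both are known. Now let $v$ be a near-tight quintuple with $v\notin E_5$. Lemma \ref{lem:reduction} places $\langle v\rangle_\R$ inside a two-dimensional proper subtorus $U\subseteq\T^5$ with $\Dd(U)=3/10$. By Theorem \ref{thm:two}, after a signed permutation of the coordinates, $U$ is one of $U_V=\langle(V,0),e_5\rangle_\R$ with $V=(1,2,3,4)$ or $V=(1,3,4,7)$. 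A signed permutation changes neither $\ML$ nor the property that the speeds are distinct and nonzero, so we may assume $\langle v\rangle_\R\subseteq U_V$.

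The generators of $U_V$ satisfy \eqref{eq:saturated}. Hence, as explained at the start of Section \ref{sec:fast}, the proper one-dimensional subtori of $U_V$ with five distinct speeds are exactly the tuples $bV\cup\{c\}$ with $b,c\ge1$ coprime and $c\notin bV$. We split according to $b$.

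\emph{Case $b\ge2$.} Proposition \ref{prop:prejump} gives $\ML\ge1/5$, so $v$ is not near-tight and this case does not arise.

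\emph{Case $b=1$.} Theorem \ref{thm:fast} with $n=5$ shows that near-tightness forces $5\mid c$, and that $\ML=s/(5s+1)$ with $s=c/5$. The single exception is $(1,3,4,7,5)$, whose value is $2/11$. This value is still of the form $s/(5s+1)$, with $s=2$. The tuple also has small volume, so we may simply adjoin it to $E_5$, although this is not even needed. In every case $\ML(v)=s/(5s+1)$ with $s\in\mathbb N$, which is the first assertion.

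The second sentence of the theorem is a finite computation. Run over all $\binom{130}{5}$ quintuples of distinct speeds at most $130$. For each one, test near-tightness and compute $\ML$ exactly with the algorithm of Lemma \ref{lem:pairtimes}, using integer cross-multiplication. Then check that every near-tight value has the form $s/(5s+1)$, that is, $q=5p+1$.

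Conceptually nothing here is difficult, since the heavy lifting is in Theorems \ref{thm:two} and \ref{thm:fast} and in the reduction lemma. The main obstacle is making the exhaustive search feasible. About $3\cdot10^8$ tuples must be processed, so I would first discard non-near-tight tuples cheaply. For this, test a few rational witness times $m/N$ (for instance $N=60$) at which all runners are at distance at least $1/5$. Only the survivors then get the full pair-time scan.
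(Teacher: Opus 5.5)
Your proposal is correct and follows the paper's proof, which consists of citing Lemma \ref{lem:reduction} with $n=5$, Theorem \ref{thm:two} and Theorem \ref{thm:fast}, followed by the exhaustive search reported after the theorem. Your split into $b\ge2$ (Proposition \ref{prop:prejump}) and $b=1$ is already contained in the last line of the proof of Theorem \ref{thm:fast}. Your remark that the exceptional value $2/11$ of $(1,3,4,7,5)$ is still $s/(5s+1)$ with $s=2$ is exactly why no further adjustment of $E_5$ is needed.
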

\begin{proof}
Lemma \ref{lem:reduction} with $n=5$, Theorem \ref{thm:two} and Theorem \ref{thm:fast}.
\end{proof}

In the notation of \cite{JainKravitz}, $S_1(5)\cap(3/10,1/2]$ has finite symmetric difference with $3/10+\tfrac15\mathrm{Prog}(5,6)$. Among the $\binom{130}5$ quintuples of distinct speeds at most $130$ there are $64$ primitive near-tight ones, all with $k=1$; $52$ lie on the two critical subtori and the following twelve do not:
\[
\begin{array}{llll}
(1,2,3,5,8)\ [2/11], & (1,4,5,6,7)\ [2/11], & (1,3,4,5,18)\ [2/11], & (1,3,4,5,9)\ [1/6],\\
(1,3,4,5,11)\ [3/16], & (2,5,7,8,9)\ [3/16], & (1,7,8,9,15)\ [3/16], & (2,3,5,8,19)\ [4/21],\\
(3,5,8,9,13)\ [4/21], & (3,5,8,11,13)\ [4/21], & (1,3,4,5,27)\ [6/31], & (1,4,5,6,27)\ [6/31].
\end{array}
\]
(The tight quintuple $(1,3,4,5,9)$ is near-tight in our sense, since $1/6<1/5$.)

\subsection{Four, seven and eight speeds}
For four speeds the two critical subtori are, by \cite[Section 3.3]{JainKravitz},
\[
U^1=\langle(0,1,2,3),(1,0,0,0)\rangle_\R\qquad\text{and}\qquad U^2=\langle(1,0,1,1),(1,1,0,2)\rangle_\R,
\]
and the relative spectra are computed there: $k=1$ on $U^1$ and both $k=1$ and $k=2$ on $U^2$, the latter giving the family of Fan and Sun. Our search over the $\binom{200}4$ quadruples with speeds at most $200$ finds $4\,125$ primitive near-tight ones, $3\,836$ with $k=1$ and $289$ with $k=2$; $50$ lie on $U^1$, $4\,075$ on $U^2$, and exactly one, $(1,3,4,14)$ with $\ML=4/17$, on neither. Its value $\Dd=9/34=1/4+1/68$ lies in the progression $1/4+\tfrac14\mathrm{Prog}(2,3)$ of \cite[Theorem 1.3]{JainKravitz}, in accordance with the remark there that no exceptional values were found; the tuple, however, is exceptional in the sense of Lemma \ref{lem:reduction}.

For seven speeds the method is blocked by the missing classification of the tight sextuples; the tight instances with six speeds found by an exhaustive search up to speed $130$ are the dilations of $(1,2,3,4,5,6)$ only. Among the $\binom{50}7$ septuples with speeds at most $50$ there are $29$ primitive near-tight ones, $26$ with $k=1$ and three with $k=2$, namely $(1,2,3,4,5,7,18)$ and $(1,3,4,5,7,13,18)$ with $\ML=3/23$, and $(1,3,4,5,7,11,30)$ with $\ML=5/37$; the first and third are the examples of \cite[Remark 30]{FanSun}. Among the $\binom{40}8$ octuples with speeds at most $40$ the $29$ primitive near-tight ones all have $k=1$. So the parity phenomenon of Theorem \ref{thm:six} is specific to six speeds: even values of $k$ occur for four and for seven speeds.

\subsection{Questions}
\begin{question}
Is there a proof of Theorem \ref{thm:six} that avoids the classification of the critical subtori, for instance by tracing a parity constraint through the proof of Theorem 1.1 of \cite{JainKravitz}?
\end{question}

\begin{question}\label{q:ratio}
For a near-tight $n$-tuple let $q$ be the denominator of $\ML$. Theorem \ref{thm:UC} gives $\max_iv_i<3q$ on $\UC$, and the ratio is below $1$ on $\UA$ and $\UB$; among all near-tight sextuples with speeds at most $110$ the largest ratio is $105/43$, and the largest ratio among the sporadic ones is $46/25$, for $(1,3,4,5,18,46)$; note that here $q=25$ although the maximum is attained only at times with denominator $50$ (Remark \ref{rem:denominators}). Does $\max_iv_i<3q$ hold for every near-tight sextuple? More generally, Kravitz has pointed out that a bound $\max_iv_i\le C(n)\,q$ for all but finitely many near-tight $n$-tuples follows from \cite{JainKravitz} whenever the Lonely Runner Conjecture holds for $n-1$ and $n-2$ speeds, the constant being a maximum over the finitely many critical subtori; the homogeneity argument in the proof of Theorem \ref{thm:UC} is how such a constant is computed. Its value for other $n$, and whether the sporadic tuples respect it, are open.
\end{question}

\begin{question}
An effective version of Lemma \ref{lem:reduction} would make Theorems \ref{thm:six} and \ref{thm:five} unconditional statements about all near-tight tuples. The constants in \cite{GiriKravitz} are explicit but enormous; is there a direct argument bounding the speeds of a near-tight $n$-tuple that does not lie on a critical subtorus?
\end{question}

\section{Code and reproducibility}\label{sec:code}

The code is archived at Zenodo \cite{Code}, under the MIT license; \texttt{run\_all.sh} there reproduces every number quoted in this paper in about twenty minutes on two cores. The programs use only the Python standard library (the module \texttt{fractions} for exact arithmetic) except for an optional use of \texttt{numpy} in the witness filter of Theorem \ref{thm:three}, and the C programs need a C99 compiler. The runs reported here used Python 3.11 and gcc 13.

\begin{itemize}\sloppy
\item \texttt{lr\_ml.py}, \texttt{lr\_subtorus.py}, \texttt{lr\_classify.py}, \texttt{lr\_lemma\_s5.py} and \texttt{lr\_lemma\_s6.py}, driven by \texttt{reproduce.py}: the classification of Theorem \ref{thm:three} (about one minute), including the exact decision procedure for $\ML(U)>1/6$, the branch-and-prune verification of Lemma \ref{lem:onetwo} and the sixteen-case check of Lemma \ref{lem:plucker}. \texttt{lr5.py}: the same for Theorem \ref{thm:two}.
\item \texttt{lrk.h}: the exact kernel for $\ML$ in C (Lemma \ref{lem:pairtimes}), used by \texttt{fastrunner.c} and \texttt{fastrunner5.c} (Theorem \ref{thm:fast}), \texttt{prejump.c} (a numerical check of Proposition \ref{prop:prejump}), \texttt{ucscan.c}, \texttt{ucdump.c} and \texttt{ucpoints.c} (direct computations on $\UC$), and \texttt{survey.c} (the exhaustive searches of Section \ref{sec:cons}, with the membership tests for the critical subtori).
\item \texttt{jk\_uc.py}, \texttt{jk\_uc2.py}, \texttt{jk\_uc3.py} and \texttt{jk\_uc4.py}, driven by \texttt{uc\_theorem.py}: the analysis of Section \ref{sec:UC}. The first module builds the components and the functions $f_\ell$ as exact piecewise linear functions and computes the zones and thresholds; the second establishes the period and validates \eqref{eq:far}; the third performs the cone decomposition and the residue analysis; the fourth treats the lines and calls \texttt{ucpoints} for the direct checks. The driver prints every number quoted in Section \ref{sec:UC} in about two minutes.
\end{itemize}
The two implementations of the $\ML$ kernel, in Python and in C, were compared on several hundred random tuples with up to seven speeds, with agreement on the value and on the optimal times.

\section*{Acknowledgements}
I am grateful to Noah Kravitz for helpful conversations and for his feedback on earlier drafts of this paper, and to Cemal Payzin for pointing out an error in the value reported in Question \ref{q:ratio} of the first arXiv version. The responsibility for any error is mine alone.

The author used the language models Claude Opus 5 and Claude Fable 5.1 (Anthropic) as assistants during this work: for writing and cross-checking the verification code, for searching the literature, and for drafting and editing the text. All statements were checked by the author, who takes full responsibility for them.

\end{document}